\newcommand\NN{{\mathbb N}}
\newcommand\RR{{{\mathbb R}}}
\def\SS {\mathbb{S}}
\newcommand\cA{{\mathcal A}}
\newcommand\cB{{\mathcal B}}
\newcommand\cD{{\mathcal D}}
\newcommand\cE{{\mathcal E}}
\newcommand\cL{{\mathcal L}}
\newcommand\cS{{\mathcal S}}
\newcommand\cT{{\mathcal T}}
\newcommand\cM{{\mathcal M}}
\newcommand\cN{{\mathcal N}}
\newcommand\pP{{\bf P}}
\newcommand\iI{{\bf I}}
\newcommand\p{{\partial}}
\newtheorem{theo}{Theorem}[section]
\newtheorem{lemm}[theo]{Lemma}
\newtheorem{prop}[theo]{Proposition}
\newtheorem{rema}[theo]{Remark}
\renewcommand{\theequation}{\thesection.\arabic{equation}}
\begin{document}

\title[Global existence for hard potential]
{ The Boltzmann equation without angular cutoff in the whole space: II, Global existence for hard potential}
\author{R. Alexandre}
\address{R. Alexandre,
IRENAV Research Institute, French Naval Academy
Brest-Lanv\'eoc 29290, France
\newline\indent
and \newline\indent
Department of Mathematics, Shanghai Jiao Tong University
\newline\indent
Shanghai, 200240, P. R. China}
\email{radjesvarane.alexandre@ecole-navale.fr}
\author{Y. Morimoto }
\address{Y. Morimoto, Graduate School of Human and Environmental Studies,
Kyoto University
\newline\indent
Kyoto, 606-8501, Japan} \email{morimoto@math.h.kyoto-u.ac.jp}
\author{S. Ukai}
\address{S. Ukai, 17-26 Iwasaki-cho, Hodogaya-ku, Yokohama 240-0015, Japan}
\email{ukai@kurims.kyoto-u.ac.jp}
\author{C.-J. Xu}
\address{C.-J. Xu, School of Mathematics and statistics, Wuhan University, 430072
Wuhan, P. R. China
\newline\indent
and \newline\indent
Universit\'e de Rouen, UMR 6085-CNRS,
Math\'ematiques
\newline\indent
Avenue de l'Universit\'e,\,\, BP.12, 76801 Saint
Etienne du Rouvray, France } \email{Chao-Jiang.Xu@univ-rouen.fr}
\author{T. Yang}
\address{T. Yang, Department of mathematics, City University of Hong Kong,
Hong Kong, P. R. China
\newline\indent
and \newline\indent
School of Mathematics, Wuhan University 430072,
Wuhan, P. R. China} \email{matyang@cityu.edu.hk}

\subjclass[2000]{35A05, 35B65, 35D10, 35H20, 76P05, 84C40}


\keywords{Boltzmann equation, non-cutoff hard potentials,
global existence.}

\begin{abstract} As a continuation of our series  works on the  Boltzmann equation without angular cutoff assumption, in this part,
the global existence of solution to the Cauchy problem in the whole space
 is proved in some suitable weighted
Sobolev spaces for hard potential when the solution is a small perturbation of a global
equilibrium.

\end{abstract}

\maketitle

\section{Introduction}\label{s1}

This paper is
among the  series works on the Boltzmann equation with non-angular cutoff
cross-section and it follows
the  paper \cite{amuxy4-1} (herein referred as Part I), extending our initial work \cite{amuxy3,amuxy3b} on the same problem for
Maxwellian molecule. Consider
\begin{equation}\label{4-3-1.1}
f_t+v\cdot\nabla_x f=Q(f, f)\,,\,\,\,\,\,\,f|_{t=0}=f_0.
\end{equation}
Recall that the right hand side of (\ref{4-3-1.1}) is the
Boltzmann bilinear collision operator, which is given in the classical $\sigma -$representation by
\[
Q(g, f)=\int_{\RR^3}\int_{\mathbb S^{2}}B\left({v-v_*},\sigma
\right)
 \left\{g'_* f'-g_*f\right\}d\sigma dv_*\,,
\]
where $f'_*=f(t,x,v'_*), f'=f(t,x,v'), f_*=f(t,x,v_*), f=f(t,x,v)$, and for
$\sigma\in \mathbb S^{2}$,
$$
v'=\frac{v+v_*}{2}+\frac{|v-v_*|}{2}\sigma,\,\,\, v'_*
=\frac{v+v_*}{2}-\frac{|v-v_*|}{2}\sigma.
$$
As in our previous papers, we  assume
that the cross-section  takes  the form
\begin{equation}\label{4-3-1.2}
B(v-v_*, \cos \theta)=\Phi (|v-v_*|) b(\cos \theta),\,\,\,\,\,
\cos \theta=\frac{v-v_*}{|v-v_*|} \, \cdot\,\sigma\, , \,\,\,
0\leq\theta\leq\frac{\pi}{2},
\end{equation}
in which it contains a kinetic factor given by
$$
\Phi (|v-v_*|) =\Phi_\gamma(|v-v_*|)= |v-v_*|^{\gamma},
$$
and a factor related to the collision angle with singularity,
$$
\begin{array}{l}
 b(\cos \theta)\approx K\theta^{-2-2s} \
\
 \mbox{when} \ \ \theta\rightarrow 0+,
\end{array}
$$
for some constant $K>0$ and a parameter $0< s<1$. Notice that this includes
the potential of inverse power law as a special example.

And the setting of the problem is for perturbation of an equilibrium state, without loss of generality, that can be normalized as
\[
\mu (v)=(2\pi )^{-\frac{3}{2}}e^{-\frac{|v|^2}{2\,\,}}.
\]
 In order to avoid the unnecessary repetition,  readers can
refer to Part I,
comments and references. Here, we just refer the references \cite{Ce88,D-L,Grad,Pao,ukai-2,villani2} for the general background of the Boltzmann equation
and the recent progress on the mathematical theories for the case
without angular cutoff,
\cite{alex-review,al-1,amuxy-nonlinear-b,amuxy-nonlinear-3,amuxy3,amuxy3b,
amuxy4-1,amuxy4-4,gr-st-2,gr-st,mouhot-strain}. Hence, we now directly go to the Cauchy problem
for the perturbation denoted by
$g=\mu^{-\frac 12}(f-\mu)$
\begin{equation}\label{4-3-1.3}
\left\{\begin{array}{l} g_t + v\cdot\nabla_x g + \cL g=\Gamma(g,\,
g), \,\,\, t>0\,,\\
g|_{t=0}=g_0.\end{array}\right.
\end{equation}
In the following discussion, we will show that this equation can be
solved in some
weighted Sobolev spaces defined by: for
$k,\,\,\ell\in\RR$, set
$$
H^k_\ell(\RR^6_{x, v})=\left\{f\in\cS'(\RR^6_{x, v})\, ;\,\, W_\ell
f\in H^k(\RR^6_{x, v})\,\right\} ,
$$
where $\RR^6_{x, v}=\RR^3_{x}\times \RR^3_{v}$ and $ W_\ell
(v)=\langle v\rangle^\ell=(1+|v|^2)^{\ell/2}$ is the weight with respect to the
velocity variable $v\in\RR^3$.

Note that in Part I, we  introduced a new norm for the
description of the  dissipation  and coercivity of the
linearized collisional operator.
 Some properties of this norm together with some estimations on the
upper bounds for the nonlinear collision operator were also given
there. And we studied the Cauchy problem for the soft potential case, that is, the case when $\gamma +2s \le 0$ (recall that this terminology is an extension of the cutoff case, which loosely speaking corresponds to the case when $s=0$).

Along this direction, this paper is for  the hard potential case, that is,
 when $\gamma +2s >0$. Note that in particular  this   includes the
case of the Maxwellian molecule. But this latter case was already considered  in \cite{amuxy3, amuxy3b}.

The main result of this paper can be
 stated as follows.

\begin{theo}\label{4-3-theo1}
Assume that the cross-section satisfies \eqref{4-3-1.2} with
 $0<s<1$ and $ \gamma+2s> 0$.
Let $g_0 \in H^{k}_\ell(\RR^6)$ for some $k\geq 6,\, \ell> 3/2+2s+\gamma$. There exists $\varepsilon_0 >0$, such that if ~
$\|g_0\|_{H^{k}_\ell(\RR^6)}\leq \varepsilon_0$,  then the Cauchy problem
\eqref{4-3-1.3} admits a global  solution
$$
g\in
L^\infty([0, +\infty[\,;\,\,H^{k}_\ell(\RR^6)).
$$
\end{theo}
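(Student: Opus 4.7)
The plan is to prove Theorem \ref{4-3-theo1} by the standard continuation argument: combine a short-time existence result in $H^{k}_\ell$ (available from an iteration procedure analogous to Part I, adapted to the regime $\gamma+2s>0$) with a uniform-in-time a priori energy-dissipation estimate
\[
\sup_{t\in [0,T]} \|g(t)\|_{H^{k}_\ell}^{2} + \int_0^T \cD(g(\tau))\, d\tau \le C \|g_0\|_{H^{k}_\ell}^{2},
\]
with $C$ independent of $T$, where $\cD$ is the dissipation functional inherited from the coercivity of $\cL$ introduced in Part I. Since the bound is $T$-independent, the local solution extends to $[0,\infty)$.

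For the a priori estimate I would apply $\pa_x^\alpha \pa_v^\beta$ with $|\alpha|+|\beta|\le k$ to \eqref{4-3-1.3} and take the $L^2(\RR^6)$-inner product with $W_\ell^{\,2}\pa_x^\alpha \pa_v^\beta g$. Pure $x$-derivatives commute with $\cL$ and with multiplication by $v$; whenever $\pa_v$ falls on the transport term it produces a commutator with one fewer $v$-derivative, handled by downward induction on $|\beta|$. The linear operator yields, via the weighted coercivity from Part I, a schematic lower bound
\[
\big(\cL\, \pa_x^\alpha \pa_v^\beta g,\, W_\ell^{\,2}\pa_x^\alpha \pa_v^\beta g\big) \ge c_0 \big\| W_\ell \pa_x^\alpha \pa_v^\beta (\iI-\pP) g\big\|_{\Phi,\gamma,s}^{2} - C \|g\|_{H^{k-1}_\ell}^{2},
\]
with $\pP$ the orthogonal projection on $\ker \cL$, after absorbing the commutator $[W_\ell^{\,2},\cL]$ into the dissipation thanks to $\gamma+2s>0$. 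The trilinear upper bound from Part I controls the nonlinear contribution by $C\sqrt{\cE(g)}\, \cD(g)$ with $\cE(g)\simeq \|g\|_{H^k_\ell}^{2}$.

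What remains is control of the hydrodynamic part $\pP g$, which $\cL$ does not dissipate. Following the Kawashima--Ukai--Guo macroscopic strategy, I would write $\pP g = \big(a(t,x) + b(t,x)\cdot v + c(t,x)(|v|^2-3)\big)\sqrt{\mu}$, extract the local conservation laws for $(a,b,c)$ from the Boltzmann equation, and use test-function identities to dominate $\|\nabla_x(a,b,c)\|_{H^{k-1}}$ by the microscopic dissipation plus the time derivative of a bounded auxiliary functional. Combined with conservation of mass, momentum and energy, this upgrades the microscopic dissipation into the full $\cD(g)$ and leads to
\[
\frac{d}{dt}\widetilde{\cE}(g) + c\, \cD(g) \le C \sqrt{\cE(g)}\, \cD(g),
\]
where $\widetilde{\cE}(g) \simeq \|g\|^2_{H^k_\ell}$ is the energy augmented by the Kawashima interaction functional. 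The smallness assumption $\|g_0\|_{H^k_\ell}\le\varepsilon_0$ absorbs the nonlinear term into the dissipation and closes the estimate.

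I expect the main obstacle to lie in the weighted trilinear estimate for $(\Gamma(g,g),\, W_\ell^{\,2}\pa^\alpha g)$ in the non-cutoff regime and in its compatibility with the weighted coercivity of $\cL$. Because $\gamma+2s>0$, the factor $|v-v_*|^{\gamma+2s}$ is genuinely dissipative at large $|v|$, so the commutators $[W_\ell, \cL]$ and $[W_\ell,\Gamma]$ can be absorbed into $\cD$; the precise condition $\ell > 3/2+2s+\gamma$ arises from the Sobolev embedding used to handle the product $g\cdot g$ together with the need to dominate the loss part of $\Gamma(g,g)$ by the weighted dissipation. This delicate balance between weight, regularity, and the singularity of the angular kernel, rather than any fundamentally new mechanism, is where the bulk of the technical work will lie.
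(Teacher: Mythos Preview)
Your proposal is correct and follows essentially the same strategy as the paper: local existence plus a uniform a priori estimate of the form $\frac{d}{dt}\cE_{N,\ell}+\cD_{N,\ell}\lesssim \cE_{N,\ell}^{1/2}\cD_{N,\ell}$, obtained by combining weighted coercivity of $\cL$, the trilinear bound (which indeed forces $\ell>3/2+2s+\gamma$ through an $L^1_{2s+\gamma}\hookleftarrow L^2_{3/2+2s+\gamma+\epsilon}$ step), downward induction on $|\beta|$ for the transport commutator, and the macroscopic $(a,b,c)$ estimate for $\pP g$. One small correction: in the whole space you do not invoke conservation of mass, momentum and energy to recover the macroscopic part; the Kawashima-type identities give control only of $\|\nabla_x(a,b,c)\|_{H^{N-1}}$, and that is all that enters $\cD_{N,\ell}$---the zero-frequency part of $(a,b,c)$ stays in $\cE_{N,\ell}$ but is not dissipated.
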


\begin{rema}
The uniqueness of the solution obtained in Theorem \ref{4-3-theo1} will be proved
in  \cite{amuxy4-4} in the general setting, together with the non-negativity of
$f=\mu+\mu^{\frac 12}g$. Recently, a similar global existence result for the inverse
power law was proved in \cite{gr-st-2, gr-st} by using different method in the setting of torus.
The method used here is simpler by using the newly introduced non-isotropic norm
which essentially captures the coercivity property of the linearized operator. Note that this
method can be applied to the Landau equation that leads to the same global existence result
obtained in \cite{guo}. Therefore, it is expected that this method can also be used for
other kinetic equations.
\end{rema}

The rest of the paper will be organized as follows.
In Section 2, we recall some basic lower and upper bound estimates on both the linearized and nonlinear operators from Parts I.
 With these estimates and some others
valid for hard potential,
 the local and global existences will be proved in Sections 3 and 4, respectively.

\renewcommand{\theequation}{\thesection.\arabic{equation}}
\section{Functional estimates of collision operators }\label{section2}
\smallskip
\setcounter{equation}{0}

First of all, let us recall the non-isotropic norm introduced in Part I
associated with $\cL$.  Corresponding to the cross-section $\Phi (|v-v_*|)b(\cos\theta)$, it is defined  by
\begin{align*}
||| g|||^2_{\Phi_\gamma}&= \iiint \Phi (|v-v_*|)b(\cos\theta) \mu_*\, \big(g'-g\,\big)^2\,\\
 &\,\,\,\,\,\,\,\,+\iiint \Phi (|v-v_*|)b(\cos\theta)g^2_* \big(\sqrt{\mu'}\,\, - \sqrt{\mu}\,\,
\big)^2\nonumber\,,
\end{align*}
where the integration  is over
~~$\RR^3_v\times\RR^3_{v_\ast}\times\SS^2_\sigma$. Without any ambiguity, sometimes we simply
use $\||\cdot\||$ for $\||\cdot\||_{\Phi_\gamma}$.

This norm was shown to be useful for the study on the soft potential. And
here, we will show that it works well for the study on the hard potential.
In the later discussion, we need the following
propositions proved in the previous two parts in this series.

\begin{prop}\label{prop2.1}(Prop. 2.1, \cite{amuxy4-1})  Assume that the cross-section satisfies \eqref{4-3-1.2} with
$0<s<1$ and $\gamma>-3$. Then for $g\in \mathcal{N}^{\perp}$
\begin{equation*}
||| g|||_{\Phi_\gamma}^2\lesssim \Big(\cL g,\, g\Big)_{L^2(\RR^3_v)}
\leq  2\Big(\cL_1 g,\, g\Big)_{L^2(\RR^3_v)}\lesssim ||| g|||_{\Phi_\gamma}^2,
\end{equation*}
where $\mathcal{N}$ is the null space of $\mathcal{L}$ defined
in Part I.
\end{prop}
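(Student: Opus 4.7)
The plan is the classical three-step comparison that underlies Prop.~2.1 of \cite{amuxy4-1}: symmetrize $(\cL g, g)_{L^2_v}$ via the pre-/post-collisional change of variables, decompose it into a manifestly coercive part $\cL_1$ and a cross remainder $\cL_2$ controlled by $\cL_1$, and finally invoke a spectral gap on $\cN^\perp$ to close the lower bound.

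For the first step, I would start from $\cL g = -\mu^{-1/2}[Q(\mu,\sqrt\mu\, g) + Q(\sqrt\mu\, g,\mu)]$, and use the identity $\mu\mu_* = \mu'\mu_*'$ together with the $(v,v_*)\leftrightarrow(v',v_*')$ swap to cast $(\cL g, g)_{L^2_v}$ in a symmetric form. This produces a natural diagonal piece
\[
2(\cL_1 g,\, g)_{L^2_v} = \iiint b(\cos\theta)\Phi(|v-v_*|)\mu_*(g'-g)^2\,d\sigma\,dv\,dv_*,
\]
which is precisely the first summand of $|||g|||^2_{\Phi_\gamma}$, giving the last inequality $(\cL_1 g, g) \lesssim |||g|||^2_{\Phi_\gamma}$ for free. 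The remainder $(\cL_2 g, g)$ can then be written after a further change of variables as an integral whose integrand contains products of $(g'-g)$ and $g_*(\sqrt{\mu'}-\sqrt\mu)$.

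For the middle inequality $(\cL g, g) \leq 2(\cL_1 g, g)$, it suffices to show $(\cL_2 g, g) \leq (\cL_1 g, g)$. The idea is a Cauchy--Schwarz split in the symmetric variables $(v,v_*,\sigma)$ of the integrand as the product $\sqrt{b\Phi\mu_*}(g'-g) \cdot \sqrt{b\Phi}\,g_*(\sqrt{\mu'}-\sqrt\mu)$, which bounds $|(\cL_2 g, g)|$ by a geometric mean of the two summands of $|||g|||^2_{\Phi_\gamma}$. The angular singularity of $b$ is tamed by the pointwise bound $(\sqrt{\mu'}-\sqrt\mu)^2 \lesssim \theta^2(\mu+\mu')$ combined with the integrability of $b(\cos\theta)\theta^2$ over $\SS^2$; the small constant from Cauchy--Schwarz is then absorbed into $2(\cL_1 g, g)$.

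The main obstacle, and the only place where $g \in \cN^\perp$ is used, is the coercive lower bound $|||g|||^2_{\Phi_\gamma} \lesssim (\cL g, g)_{L^2_v}$. The first summand of $|||g|||^2_{\Phi_\gamma}$ is already controlled by $2(\cL_1 g, g) \le 2(\cL g, g) + \text{(lower order)}$ from the previous step, but the second summand $\iiint b\Phi g_*^2(\sqrt{\mu'}-\sqrt\mu)^2$ is not itself a dissipation. One dominates it by a weighted $L^2$ bound
\[
\iiint b\,\Phi\, g_*^2(\sqrt{\mu'}-\sqrt\mu)^2\,d\sigma\,dv\,dv_* \lesssim \|g\|_{L^2_w(\RR^3_v)}^2,
\]
for a suitable weight $w$ depending on $\gamma$, obtained again via the $\theta^2$-bound and integration in $\sigma$ first. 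One then closes the estimate with the non-cutoff spectral gap $(\cL g, g)_{L^2_v} \gtrsim \|g\|_{L^2_w}^2$ on $\cN^\perp$ (the non-cutoff extension of Baranger--Mouhot-type coercivity, valid for $\gamma>-3$), which is where the orthogonality hypothesis is essential and which is the technically hardest ingredient imported from the earlier analysis.
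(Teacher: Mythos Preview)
The paper does not supply a proof of this proposition: it is explicitly recalled from Part~I (\cite{amuxy4-1}, Prop.~2.1), as indicated both in the statement's heading and in the sentence preceding it (``we need the following propositions proved in the previous two parts in this series''). There is therefore no proof in the present paper against which to compare your attempt.

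As for your sketch itself, the overall architecture (symmetrization, Cauchy--Schwarz on cross terms, spectral gap on $\cN^\perp$) is the standard one, but one identity you rely on is not correct as written. The symmetrized form of $(\cL_1 g,g)$ is
\[
2(\cL_1 g,g)_{L^2_v}=\iiint b\,\Phi\,\big(\sqrt{\mu_*}\,g-\sqrt{\mu'_*}\,g'\big)^2\,d\sigma\,dv_*\,dv,
\]
which is \emph{not} just the first summand of $|||g|||^2_{\Phi_\gamma}$; expanding the square produces both summands of the triple norm plus a cross term, and it is Cauchy--Schwarz on that cross term which yields the upper bound $2(\cL_1 g,g)\lesssim|||g|||^2_{\Phi_\gamma}$. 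Likewise, the exact middle inequality $(\cL g,g)\le 2(\cL_1 g,g)$ follows from the companion identity
\[
2(\cL_2 g,g)_{L^2_v}=\iiint b\,\Phi\,\big(\sqrt{\mu_*}\,g-\sqrt{\mu'_*}\,g'\big)\big(\sqrt{\mu}\,g_*-\sqrt{\mu'}\,g'_*\big)\,d\sigma\,dv_*\,dv
\]
together with Cauchy--Schwarz and the $(v,v_*)$ symmetry, giving $|(\cL_2 g,g)|\le(\cL_1 g,g)$ with the sharp constant. Your proposed split of the integrand as $\sqrt{b\Phi\mu_*}(g'-g)\cdot\sqrt{b\Phi}\,g_*(\sqrt{\mu'}-\sqrt{\mu})$ is not the factorization that actually appears, so while the idea is right, the execution would need to be corrected before it goes through.
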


\smallbreak
\begin{prop}\label{partI-prop2.2} (Prop. 2.2, \cite{amuxy4-1})
Assume that the cross-section satisfies \eqref{4-3-1.2}  with
$0<s<1$ and $\gamma>-3$.
Then
\[
\left\|
g\right\|^2_{H^s_{\frac{\gamma}{2}}(\RR^3_v)}+\left\|
g\right\|^2_{L^2_{s+\frac{\gamma}{2}}(\RR^3_v)}
\lesssim ||| g |||_{\Phi_\gamma}^2 \lesssim \left\|
g\right\|^2_{H^s_{s+\frac{\gamma}{2}}(\RR^3_v)}\,.
\]
\end{prop}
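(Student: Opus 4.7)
The plan is to prove the two-sided bound by splitting
\[
|||g|||^2_{\Phi_\gamma} = A(g) + B(g),
\]
where $A(g)$ is the ``difference'' piece $\iiint \Phi(|v-v_*|) b(\cos\theta) \mu_*(g'-g)^2\, d\sigma dv_* dv$ and $B(g)$ is the ``weight'' piece $\iiint \Phi(|v-v_*|) b(\cos\theta) g_*^2(\sqrt{\mu'}-\sqrt{\mu})^2\, d\sigma dv_* dv$, and controlling each separately against the relevant weighted Sobolev norms.

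For the lower bound, I would first treat $A(g)$ via Bobylev's identity. For each fixed $v_*$, the angular integration of the squared difference $(g'-g)^2$ against $b(\cos\theta)$ translates into a Fourier-side multiplier of order $2s$, owing to $b(\cos\theta)\sim\theta^{-2-2s}$, so that Plancherel produces something like $\int |\xi|^{2s}|\hat g(\xi)|^2 d\xi$ after localization in $v_*$. Integrating in $v_*$ against $\mu_*|v-v_*|^\gamma$, one recovers the weight $\langle v\rangle^\gamma$ on the region $|v_*|\lesssim\langle v\rangle/2$ where $\mu_*$ is essentially constant, giving $\|g\|^2_{H^s_{\gamma/2}}\lesssim A(g)$. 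For $B(g)$, I would Taylor-expand to get $(\sqrt{\mu'}-\sqrt{\mu})^2 \lesssim \sin^2(\theta/2)|v-v_*|^2 \widetilde\mu(v,v_*)$ for a suitable Gaussian $\widetilde\mu$, use that $\int b(\cos\theta)\sin^2(\theta/2)\,d\sigma <\infty$ since $s<1$, and then observe that the resulting polynomial against the Gaussian yields effectively $|v-v_*|^{\gamma+2s}$ rather than $|v-v_*|^{\gamma+2}$ once one exploits the half-derivative cancellation; integrating against $g_*^2$ then gives $\|g\|^2_{L^2_{s+\gamma/2}}\lesssim B(g)$.

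For the upper bound, the strategy is symmetric. Bobylev's identity bounds $A(g)$ by a Fourier-side integral of the form $\iint \Phi(|v-v_*|)\mu_* \langle\xi\rangle^{2s}|\hat g(\xi)|^2\, dv_* d\xi$, which after $v_*$-integration is dominated by $\|g\|^2_{H^s_{s+\gamma/2}}$, the extra $\langle v\rangle^s$ coming from the commutator between the polynomial weight $|v-v_*|^\gamma$ and the order-$2s$ angular operator. The piece $B(g)$ is treated by the pointwise bound $(\sqrt{\mu'}-\sqrt{\mu})^2 \lesssim \sin^2(\theta/2)\widetilde\mu(v,v_*)|v-v_*|^2$ (valid on the effective support of $\widetilde\mu$), after which integration against $b$ and $g_*^2$ yields an $L^2_{s+\gamma/2}$ upper bound absorbed by $\|g\|^2_{H^s_{s+\gamma/2}}$.

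The main obstacle is obtaining the \emph{sharp} weight $s+\gamma/2$ in the upper bound; a crude estimate only yields $\gamma/2$. Extracting the additional $\langle v\rangle^s$ requires a delicate interplay between the singular angular kernel and the kinetic factor $|v-v_*|^\gamma$, most naturally handled by a pseudodifferential calculus viewpoint in which $\cL$ is realized as a Weyl-quantized operator whose symbol is of order $2s$ in $\xi$ and $\gamma$ in $v$; the associated commutator estimates produce exactly this half-degree gain. This is the core of the corresponding argument in Part I, which here is quoted as a black box.
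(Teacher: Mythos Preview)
This proposition is not proved in the present paper: it is quoted verbatim from Part~I of the series (reference \cite{amuxy4-1}), and no argument is given here. So there is nothing to compare against directly; what follows is an assessment of your sketch on its own terms.

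Your overall decomposition $|||g|||^2 = A(g)+B(g)$ and the identification of $A(g)$ with the $H^s_{\gamma/2}$ piece and $B(g)$ with the $L^2_{s+\gamma/2}$ piece are correct. However, your argument for the lower bound $\|g\|^2_{L^2_{s+\gamma/2}}\lesssim B(g)$ runs in the wrong direction. You write $(\sqrt{\mu'}-\sqrt{\mu})^2 \lesssim \sin^2(\theta/2)\,|v-v_*|^2\,\widetilde\mu$ and then integrate against $b$; this is an \emph{upper} bound on the integrand, so what it yields is $B(g)\lesssim \|g\|^2_{L^2_{1+\gamma/2}}$, not a lower bound on $B(g)$. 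The phrase ``half-derivative cancellation'' does not repair this and is not a mechanism that converts an upper bound into a lower one.

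The correct lower bound for $B(g)$ comes from recognizing that for large $|v-v_*|$ the Taylor regime $\theta\lesssim |v-v_*|^{-1}$ must be separated from the regime $\theta\gtrsim |v-v_*|^{-1}$, where $(\sqrt{\mu'}-\sqrt{\mu})^2$ is of order $\mu$ rather than $O(\theta^2)$. On that second region $\int b(\cos\theta)\,d\sigma$ contributes $\sim |v-v_*|^{2s}$, and integration in $v$ against $|v-v_*|^\gamma$ and the surviving Gaussian then gives $\int g_*^2\,\langle v_*\rangle^{\gamma+2s}\,dv_*$. This two-scale splitting is the actual source of the exponent $\gamma+2s$; it is not a Taylor estimate. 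A similar remark applies to your upper bound for $B(g)$: the Taylor inequality alone gives weight $1+\gamma/2$, not $s+\gamma/2$, and one must again split at $\theta\sim |v-v_*|^{-1}$ to recover the sharp exponent. Your final paragraph correctly flags the upper bound on $A(g)$ as the delicate point, and your acknowledgment that Part~I is being used as a black box is appropriate; but the $B(g)$ step as written is a genuine gap, not merely a citation.
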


\begin{prop}\label{prop5.1} (Theorem 1.2, \cite{amuxy4-1})  Assume that  $0< s<1, \gamma+2s>0$. Then
\begin{align*}
\left| \big(\Gamma (f , g),\, h\big)_{L^2(\RR^3)} \right| &\lesssim \Big\{ \|f\|_{L^2_{s+\gamma/2}}|||g|||_{\Phi_\gamma}
+ \|g\|_{L^2_{s+\gamma/2}}|||f|||_{\Phi_\gamma} \Big\}|||h|||_{\Phi_\gamma} \, .\notag
\end{align*}
\end{prop}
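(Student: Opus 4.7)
The plan is to bound the trilinear form $(\Gamma(f,g),h)_{L^2(\RR^3_v)}$ by first expressing it in its pre/post--collisional integral form
\[
(\Gamma(f,g),h)_{L^2(\RR^3_v)}
= \iiint \Phi_\gamma(|v-v_*|)\,b(\cos\theta)\,\sqrt{\mu_*}\,(f'_*g'-f_*g)\,h\,d\sigma\,dv_*\,dv,
\]
and then decomposing the integrand via
\[
\sqrt{\mu_*}\,(f'_*g'-f_*g) = \sqrt{\mu_*}\,f_*\,(g'-g) + \sqrt{\mu_*}\,(f'_*-f_*)\,g',
\]
together with the dual piece obtained by redistributing $\sqrt{\mu_*}$ to $\sqrt{\mu'_*}$ via $(\sqrt{\mu'_*}-\sqrt{\mu_*})$. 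The resulting differences $(g'-g)$, $(f'_*-f_*)$, $(\sqrt{\mu'_*}-\sqrt{\mu_*})$ are precisely the building blocks of the non-isotropic norm $|||\cdot|||_{\Phi_\gamma}$, so each piece can ultimately be controlled in terms of $|||g|||$, $|||f|||$, or a pure $\mu$-Gaussian factor, respectively.

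The closing step is Cauchy--Schwarz in the measure $\Phi_\gamma(|v-v_*|)\,b(\cos\theta)\,\sqrt{\mu_*}\,d\sigma\,dv_*\,dv$. For the first piece I would pair $(g'-g)$ with an $h$-factor that has been reshaped into an $h$-difference (using the pre/post--collisional involution $(v,v_*,\sigma)\leftrightarrow (v',v'_*,\sigma')$ on half of the integral), producing $|||g|||_{\Phi_\gamma}\cdot|||h|||_{\Phi_\gamma}$, while the leftover $\sqrt{\mu_*}\,f_*$ collapses to $\|f\|_{L^2_{s+\gamma/2}}$ since the Gaussian in $v_*$ kills all $v_*$-moments needed to absorb $\gamma>0$. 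The second piece is treated symmetrically to yield $\|g\|_{L^2_{s+\gamma/2}}|||f|||_{\Phi_\gamma}|||h|||_{\Phi_\gamma}$. The fact that the outer weight is exactly $s+\gamma/2$ rather than $s+\gamma$ is enforced by Proposition~\ref{partI-prop2.2}: one half of $\Phi_\gamma\approx\langle v\rangle^\gamma$ is absorbed into the $|||\cdot|||$-factor of $h$, leaving $\langle v\rangle^{\gamma/2}$; the extra $\langle v\rangle^s$ comes from the angular Taylor expansion when converting $(g'-g)$ estimates into the dissipation norm.

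The main obstacle is handling the grazing region $\theta\to 0$ while retaining this sharp weight in the hard-potential regime. The singular kernel $b(\cos\theta)\sim\theta^{-2-2s}$ forces a symmetrization in $\sigma$ (or a Bobylev/Carleman change of variables) so that the odd-in-$\sigma$ first-order Taylor contribution of $g'-g$ cancels, leaving a second-order expression whose square is integrable against $b(\cos\theta)$ and defines $|||g|||^2$. The bookkeeping that matches each such second-order piece with a corresponding piece in $h$, while simultaneously keeping the $v$-moment at $s+\gamma/2$ rather than $s+\gamma$ (which would be useless since it exceeds the exponent that $|||\cdot|||$ controls), is the delicate part; it is also what makes the resulting estimate compatible with the coercivity of Proposition~\ref{prop2.1} and hence usable in the energy method for the hard-potential Cauchy problem.
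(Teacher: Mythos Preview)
The paper does not prove this proposition at all: it is quoted verbatim as Theorem~1.2 of \cite{amuxy4-1} and used as a black box throughout Sections~3 and~4. So there is no ``paper's own proof'' to compare against; your sketch is an attempt to reconstruct what happens in the cited reference.

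As a reconstruction, your outline captures the right ingredients (the integral representation of $(\Gamma(f,g),h)$, the telescoping decomposition of $f'_*g'-f_*g$, the link between the differences $(g'-g)$, $(\sqrt{\mu'_*}-\sqrt{\mu_*})$ and the two pieces of $|||\cdot|||_{\Phi_\gamma}$, and Cauchy--Schwarz in the collisional measure), and is consistent with how the estimate is actually obtained in \cite{amuxy4-1}. Two points, however, are genuinely under-specified and would need real work to turn into a proof. First, in the piece $\sqrt{\mu_*}\,(f'_*-f_*)\,g'$ the difference is in the starred variable, so it does not match the non-isotropic norm $|||f|||_{\Phi_\gamma}$ (which is built from $\mu_*(f'-f)^2$) until you perform the pre/post--collisional change of variables $(v,v_*)\leftrightarrow(v_*,v)$; after that change the roles of $f$ and $g$ are swapped, which is exactly why the final bound is symmetric in $(f,g)$, but you have to track where the $\sqrt{\mu}$ lands. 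Second, your claim that ``the $h$-factor can be reshaped into an $h$-difference'' hides the hardest step: writing $h=(h-h')+h'$ forces you to control an additional term carrying $h'$ with no difference, and the cancellation that makes this term harmless (essentially the cancellation lemma / the second piece of $|||\cdot|||$ applied after the involution) is the place where the sharp weight $s+\gamma/2$ rather than $s+\gamma$ is won. Your final paragraph acknowledges this is ``the delicate part'', but the mechanism you describe (Taylor expansion and odd-in-$\sigma$ cancellation) is not quite what is used in \cite{amuxy4-1}; there the matching is done directly at the level of the two summands defining $|||\cdot|||_{\Phi_\gamma}^2$ rather than through a Taylor remainder.
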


Note that the above estimate on the nonlinear collision operator is not
enough for the proof of global existence because of the weight in $L^2_{s+\gamma/2}$. For this, we need to combine this with the following proposition.
For the proof of the following
proposition, we first recall an upper bound estimate for
a modified kernel
$\tilde\Phi_\gamma(z)=(1+|z|^2)^{\gamma/2}$, cf. Theorem 2.1 in  \cite{amuxy-nonlinear-3}. That is,
for any $0<s<1$, $\gamma \in\RR$ and any $m,
\,\alpha \in \RR$, we have
\begin{equation}\label{2.2+001}
\|Q_{\tilde\Phi_\gamma}(f,\, g)\|_{H^{m}_{\alpha}(\RR^3_v)}\lesssim \|f\|_{L^{1}_{{\alpha^+
+}( \gamma+2s)^+}(\RR^3_v)} \| g\|_{H^{m+2s}_{({\alpha +}
\gamma+2s)^+}(\RR^3_v)}\,.
\end{equation}

\begin{prop}\label{prop5.2} Let $0< s<1,  \gamma+2s>0$. Then
\begin{align*}
&\Big | \Big(\Gamma( f,g),\, h \Big ) \Big|
\lesssim |||h|||_{\Phi_\gamma}
\Big\{ ||f||_{L^{2}_{3/2+2s+\gamma+\epsilon}}\, || g
||_{H^{\max\{2s, 1\}}_{s+\gamma/2}}+|||f |||_{\Phi_\gamma} \|g\|_{L^\infty} \Big\}\,.
\end{align*}
\end{prop}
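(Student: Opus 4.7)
The strategy is a classical angular decomposition: split the angular factor $b$ into a bounded piece $b_{\rm r}=(1-\chi)\,b$ and a piece $b_{\rm s}=\chi\,b$ concentrated near $\theta=0$ (where $\chi\in C^\infty_c$ is a cutoff equal to $1$ near $0$), inducing $\Gamma=\Gamma_{\rm r}+\Gamma_{\rm s}$. The regular piece will produce the first term on the right-hand side via the bilinear bound \eqref{2.2+001}, and the singular piece will produce the second term directly through the quadratic form defining $|||\cdot|||_{\Phi_\gamma}$. At the outset it is convenient to use the collision invariant $\mu\,\mu_*=\mu'\,\mu'_*$ to rewrite
$$
\Gamma(f,g)(v)=\iint B(v-v_*,\sigma)\,\mu^{1/2}(v_*)\,[f(v'_*)\,g(v')-f(v_*)\,g(v)]\,d\sigma\,dv_*,
$$
making the Gaussian factor $\mu^{1/2}(v_*)$ explicit since it will absorb most polynomial $v_*$-weights.

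For the regular piece $\Gamma_{\rm r}$ the angular factor $b_{\rm r}$ is bounded on $\mathbb S^2$, so this operator behaves like a classical cutoff collision operator whose kinetic factor $\Phi_\gamma$ can be dominated by the modified kernel $\tilde\Phi_\gamma$ on $|v-v_*|\gtrsim 1$ (and, when $\gamma<0$, the integrable local singularity at $v=v_*$ is handled by $\mu^{1/2}(v_*)$). Applying \eqref{2.2+001} with $m=0$ and $\alpha=-(s+\gamma/2)\le 0$ (so $\alpha^+=0$, while $(\gamma+2s)^+=\gamma+2s$ by hypothesis) gives
$$
\|\Gamma_{\rm r}(f,g)\|_{L^2_{-(s+\gamma/2)}}\lesssim\|f\|_{L^1_{\gamma+2s}}\,\|g\|_{H^{2s}_{s+\gamma/2}}.
$$
The weighted embedding $\|f\|_{L^1_\beta}\lesssim\|f\|_{L^2_{\beta+3/2+\epsilon}}$ (Cauchy--Schwarz against $\langle v\rangle^{-3/2-\epsilon}\in L^2$) upgrades the first factor to $\|f\|_{L^2_{3/2+2s+\gamma+\epsilon}}$, and the Sobolev monotonicity $\|g\|_{H^{2s}}\le\|g\|_{H^{\max\{2s,1\}}}$ raises the second to $\|g\|_{H^{\max\{2s,1\}}_{s+\gamma/2}}$. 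Pairing against $h$ by Cauchy--Schwarz in $L^2$ and invoking Proposition~\ref{partI-prop2.2} ($\|h\|_{L^2_{s+\gamma/2}}\lesssim|||h|||_{\Phi_\gamma}$) produces the first term.

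For the singular piece $\Gamma_{\rm s}$ use the decomposition
$$f(v'_*)\,g(v')-f(v_*)\,g(v)=f(v'_*)\,(g(v')-g(v))+g(v)\,(f(v'_*)-f(v_*)),$$
and bound $g$ and $g'$ pointwise by $\|g\|_{L^\infty}$. The pre/post-collisional change of variables $(v,v_*,\sigma)\leftrightarrow(v',v'_*,\sigma^\ast)$ (Jacobian one) converts the $(f(v'_*)-f(v_*))$-difference into a $(f'-f)$-difference compatible with the $|||\cdot|||_{\Phi_\gamma}$ norm, while the manipulation $\sqrt\mu_*\,(g'-g)=(\sqrt\mu_*\,g)'-(\sqrt\mu_*\,g)-g'(\sqrt{\mu_*}'-\sqrt\mu_*)$ re-expresses the $(g'-g)$-difference so that the $(\sqrt\mu'-\sqrt\mu)$-half of $|||\cdot|||$ appears. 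Cauchy--Schwarz in the measure $\Phi_\gamma\,b_{\rm s}\,d\sigma\,dv_*\,dv$ then pairs the $f$-differences against the $h$-differences, each half of $|||f|||_{\Phi_\gamma}^2$ coupling with the corresponding half of $|||h|||_{\Phi_\gamma}^2$, and yields $|||f|||_{\Phi_\gamma}\|g\|_{L^\infty}|||h|||_{\Phi_\gamma}$.

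The main obstacle is the singular piece: the $\theta$-singularity of $b_{\rm s}$ must be absorbed entirely by the $|||\cdot|||_{\Phi_\gamma}$ norms on $f$ and $h$ (only $L^\infty$ on $g$ being allowed), which forces both halves of the definition of $|||\cdot|||_{\Phi_\gamma}$ to be used in tandem and requires a careful symmetrization together with re-routing of the Gaussian factor $\mu^{1/2}(v_*)$. By contrast, the regular piece is routine weight-bookkeeping once \eqref{2.2+001} is invoked correctly.
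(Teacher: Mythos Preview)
Your decomposition is genuinely different from the paper's: the paper splits the \emph{kinetic} factor $\Phi_\gamma=\Phi_c+\Phi_{\bar c}$ (near versus far relative velocity), while you split the \emph{angular} factor $b=b_{\rm s}+b_{\rm r}$. This choice creates a real obstruction in your treatment of the singular piece.

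Consider your term $I_2=\iiint \Phi_\gamma\,b_{\rm s}\,\mu_*^{1/2}\,g(v)\,(f(v'_*)-f(v_*))\,h(v)$. To turn $f(v'_*)-f(v_*)$ into the $(f'-f)$ appearing in $|||f|||_{\Phi_\gamma}$ you need the swap $v\leftrightarrow v_*$ (the pre/post change $(v,v_*)\leftrightarrow(v',v'_*)$ you cite only flips the sign, it does not move the difference onto the $v$-variable). After the swap the Gaussian becomes $\mu^{1/2}(v)$ and $h$ sits at $v_*$; neither matches the structure of $|||\cdot|||_{\Phi_\gamma}$, which requires $\mu_*$ paired with $(h'-h)^2$ or $h_*^2$ paired with $(\sqrt{\mu'}-\sqrt{\mu})^2$. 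Because $b_{\rm s}$ does not localise $|v-v_*|$, the quantities $\mu(v)$ and $\mu(v_*)$ are not comparable and the Cauchy--Schwarz you sketch cannot close. The paper avoids exactly this problem: with $\Phi_c$ supported on $|v-v_*|\lesssim 1$, one has $\mu(v)\sim\mu(v_*)\sim\mu(v')\sim\mu(v'_*)$, so the Gaussian can be freely reassigned. Note also that the paper's $\Gamma_c$ estimate produces \emph{two} terms, $|||f|||_{\Phi_\gamma}\|g\|_{L^\infty}$ and $\|f\|_{L^2}\|g\|_{H^1}$; the latter is the origin of the ``$1$'' in $\max\{2s,1\}$. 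Your $I_1$, containing $(g'-g)$, cannot yield a pure $\|g\|_{L^\infty}$ bound (that would destroy the cancellation needed to integrate $b_{\rm s}$), so you are missing a term.

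A secondary issue: \eqref{2.2+001} is stated for the kernel $\tilde\Phi_\gamma\,b$ with the \emph{full} singular $b$, whereas your $\Gamma_{\rm r}$ has kernel $\Phi_\gamma\,b_{\rm r}$ with $b_{\rm r}$ bounded; these are not the same hypothesis, and when $\gamma<0$ the singularity of $\Phi_\gamma$ at $v=v_*$ is not ``handled by $\mu^{1/2}(v_*)$'' (that factor gives no smallness near $v_*=v$). The paper circumvents this by using $\Phi_{\bar c}\lesssim\tilde\Phi_\gamma$ with the full singular $b$, so that \eqref{2.2+001} applies verbatim, and by treating the remainder $(\sqrt{\mu_*}-\sqrt{\mu'_*})f'_*g'$ separately via a $D_1+D_2$ splitting.
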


\begin{proof}
As in Part I, we apply the decomposition on the kinetic factor in the cross-section:
Let $0\le \phi(z)\le 1$ be a smooth radial function with $1$ for $z$ close to $0$,
and $0$ for large value of $z$. Set
\begin{equation*}
\Phi_\gamma (z) =\Phi_\gamma (z)\phi(z) +\Phi_\gamma (z)(1-\phi(z))=\Phi_c(z)+\Phi_{\bar{c}}(z).
\end{equation*}
We denote by $\Gamma_c(\cdot, \cdot), \Gamma_{\bar{c}}(\cdot, \cdot)$ the collision operators with the kinetic factors
in the cross-section given by $\Phi_c$ and $\Phi_{\bar{c}}$ respectively.
Note that
$$
\Big(\Gamma( f,  g),\, h \Big )=
\Big(\Gamma_{c}( f,  g),\, h \Big )+
\Big(\Gamma_{{\bar{c}}}( f,  g),\, h \Big ).
$$
Note that
\begin{align*}
&\Big(\Gamma_{\bar{c}}( f,\,
  g) ,\,\, h\Big)_{L^2(\RR^3_{v})}=\Big(Q_{{\bar{c}}}(\sqrt{\mu}\,
   f,\,  g) ,\,\,
 h\Big)_{L^2(\RR^3_{v})}\\
 &+ \iiint\Phi_{\bar{c}}(|v-v_*|) b(\cos\theta)
\Big(\sqrt{\mu_*}\, -\sqrt{\mu'_*}\,\,\Big) f'_*
g' h d v_* d \sigma dv\,.
\end{align*}
Since $\Phi_{\bar{c}}\lesssim\tilde\Phi_\gamma$, as shown in the Proposition 3.5 of
\cite{amuxy3}, we use (\ref{2.2+001}) with $m=0, \alpha=-s-\gamma/2$ to have
\begin{align*}
 \left|\Big(Q_{\bar{c}}(\sqrt{\mu}\,
  f,\,   g) ,\,\,
 h\Big)_{L^2(\RR^3_{v})}\right|&\lesssim \|h\|_{L^2_{s+\gamma/2}(\RR^3)}\| \sqrt{\mu}\,
  f\|_{L^1_{2s+\gamma}(\RR^3_v)}
 \|g\|_{H^{2s}_{s+\gamma/2}(\RR^3_v)}\\
& \lesssim \| f\|_{L^2(\RR^3)} ||g||_{H^{2s}_{s+\gamma/2}(\RR^3)} ||h||_{L^2_{s+\gamma/2}(\RR^3)}.
 \end{align*}
On the other hand, we can write
\begin{align*}
&\iiint \Phi_{\bar{c}}(|v-v_*|) b(\cos\theta) \Big(\sqrt{\mu_*}\,
-\sqrt{\mu'_*}\,\,\Big) f'_*
\,g' h d v_* d \sigma dv\,\\
&=\iiint \Phi_{\bar{c}}(|v-v_*|) b(\cos\theta) \Big(\sqrt{\mu_*}\,
-\sqrt{\mu'_*}\,\,\Big)f'_*
\,g' \Big(h-h'\Big) d v_* d \sigma dv\\
& +\iiint \Phi_{\bar{c}}(|v-v_*|) b(\cos\theta) \Big(\sqrt{\mu_*}\,
-\sqrt{\mu'_*}\,\,\Big) f'_*
\,g' h' d v_* d \sigma dv\\
&= D_1 + D_2\,.
\end{align*}
By the Cauchy-Schwarz inequality, one has
\begin{align*}
|D_1| &\leq \left(\iiint \Phi_{\bar c}(|v-v_*|) b(\cos\theta) |
f'_\ast |^2 |g '|^2 \Big((\mu_*)^{1/4} -
(\mu'_*)^{1/4}\Big)^2 d v_* d
\sigma dv\right)^{1/2}\\
&\qquad\times \left(\iiint\Phi_{\bar c}(|v-v_*|) b(\cos\theta) \Big( \mu^{1/4}_\ast +
(\mu'_*)^{1/4} \Big)^2 (h-h')^2 d v_* d \sigma dv\right)^{1/2}\, .
\end{align*}
As Lemma 2.6 in \cite{amuxy4-1}, for $\gamma+2s>0$, we have
\begin{align*}
&\iiint \Phi_{\bar c}(|v-v_*|) b(\cos\theta) |  f'_\ast |^2 | g
'|^2 \Big((\mu_*)^{1/4} - (\mu'_*)^{1/4}\Big)^2 d v_* d \sigma
dv\\
&\lesssim \iint |v-v_*|^{\gamma+2s} |  f_\ast |^2 | g|^2  d v_* dv\\
&\lesssim \iint |  f_\ast |^2 | g |^2 \langle v \rangle^{2s+\gamma} \,\langle v_* \rangle^{2s+\gamma} d v_* d dv\\
&\lesssim || f||^2_{L^2_{s+\gamma/2}(\RR^3)}
|| g ||^2_{L^2_{s+\gamma/2}(\RR^3)} \, ,
\end{align*}
and
\begin{align*}
& \iiint \Phi_{\bar c} b(\cos\theta) \Big( \mu^{1/4}_\ast + (\mu'_*)^{1/4}
\Big)^2 (h-h')^2 d v_* d \sigma dv\\
&\leq 4 \iiint \Phi_{\bar c} b(\cos\theta)  \mu^{1/2}_\ast (h-h')^2 d v_* d
\sigma dv\lesssim |||h|||^2_{\Phi_\gamma}.
\end{align*}
Therefore, we obtain
$$
|D_1|\lesssim ||f||_{L^2_{s+\gamma/2}(\RR^3)} ||g
||_{H^{2s}_{s+\gamma/2}(\RR^3)}|||h|||_{\Phi_\gamma}.
$$
For the term $D_2$, we have by using the symmetry in the integral to have
\begin{align*}
&\left|\iiint\Phi_{\bar c} b(\cos\theta) \Big(\sqrt{\mu_*}\,
-\sqrt{\mu'_*}\,\,\Big)f'_* \,g' h' d v_*
d \sigma dv\right|\\
&=\left|\iiint\Phi_{\bar c} b(\cos\theta) \Big(\sqrt{\mu'_*}\,
-\sqrt{\mu_*}\,\,\Big) f_* \,g h d v_* d
\sigma dv\right|\\
&\lesssim \int_{\RR^6_{v, v_*}}|
f_*|\, |g|\,\, |h| \langle
v\rangle^{2s+\gamma}\langle v_*\rangle^{2s+\gamma} d v_* d  dv\\
&\lesssim \| f\|_{L^1_{2s+\gamma}(\RR^3)}
\|\,g\|_{L^2_{s+\gamma/2}(\RR^3)} \|h\|_{L^2_{s+\gamma/2}(\RR^3)}\\
&\lesssim \| f\|_{L^2_{3/2+2s+\gamma+\epsilon}(\RR^3)} \|g\|_{L^2_{s+\gamma/2}(\RR^3)} \|h\|_{L^2_{s+\gamma/2}(\RR^3)}.
\end{align*}
Hence
$$
|D_2|\lesssim ||f||_{L^2_{3/2+2s+\gamma+\epsilon}(\RR^3)} \|g\|_{L^2_{s+\gamma/2}(\RR^3)} \|h\|_{L^2_{s+\gamma/2}(\RR^3)}.
$$
Therefore, it follows that
\[
\left|\Big(\Gamma_{\bar c}( f,\,g) ,\,\, h\Big)_{L^2(\RR^3)}\right|\lesssim
 ||f||_{L^2_{3/2+2s+\gamma+\epsilon}(\RR^3)}\, || g
||_{H^{2s}_{s+\gamma/2}}\,|||h|||_{\Phi_\gamma}.
\]
Next, similar to the arguments used
 in Part II, we have
\begin{align*}
\Big|\Big(\Gamma_c( f,\, g) ,\,\, ,h \Big ) \Big|
\lesssim&  \, \Big( \iiint  b  \Phi_c \mu_*^{1/2} \Big(f'_*
 g' - f_*\, g \Big) ^2  d \sigma dv dv_*   \Big)^{1/2} \, ||| h |||_{\Phi_c} \\
=&  {A}^{1/2} \, ||| h|||_{\Phi_c} \,.
\end{align*}
Since
$$
\Phi_c\leq \Phi_\gamma,
$$
we have $||| h|||_{\Phi_c}\leq ||| h|||_{\Phi_\gamma}$, and
$$
A 
\lesssim \Big\{|||f |||^2_{\Phi_\gamma} \|g\|^2_{L^\infty}
+ \|f\|^2_{L^2}\|g\|^2_{H^{\max(-\gamma/2, 1)}}
\Big \}\,.
$$
Then,  for $\gamma>-2s>-2$, we have $-\gamma/2<1$ and
\[
\left|\Big(\Gamma_c(f,\,
g) ,\,\, h\Big)_{L^2(\RR^3)}\right|\lesssim
 \Big\{|||f |||_{\Phi_\gamma} \|g\|_{L^\infty}
+ \|f\|_{L^2}\|g\|_{H^1}
\Big\}\,|||h|||_{\Phi_\gamma}.
\]
And this completes the proof of the proposition.
\end{proof}

In the following, we also need the following estimate on the commutator
of the weight function $W_\ell$ and the nonlinear collisional operator
$\Gamma(\,\cdot,\,\cdot\,)$ that follows from
Proposition 2.17 in \cite{amuxy4-1}.

\begin{prop}\label{weight-comm} Assume that  $0< s<1$ and
$\gamma+2s>0$. Then, for any $\ell\ge 0$, one has
\begin{equation*}
\Big| \big( W_\ell \Gamma (f, g) - \Gamma (f, W_\ell g),\,\,  h \big)_{L^2(\RR^3_v)}\Big|
\lesssim \Big\{ || f||_{L^2_{s+\gamma /2}} ||g ||_{L^2_{\ell+\gamma /2}} + || g||_{L^2_{s+\gamma /2}} ||f ||_{L^2_{\ell+\gamma /2}}\Big\} ||| h|||_{\Phi_\gamma}\, .
\end{equation*}
\end{prop}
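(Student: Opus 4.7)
The strategy is to reduce the claim to a direct application of Proposition 2.17 of Part I. Using the representation $\Gamma(f,g)=\mu^{-1/2}Q(\sqrt{\mu}f,\sqrt{\mu}g)$, the key algebraic observation is that $W_\ell$ depends only on $v$, appearing linearly in the loss term of $Q$ but through $v'$ in the gain term. Consequently the loss contributions in $W_\ell\Gamma(f,g)$ and in $\Gamma(f,W_\ell g)$ cancel exactly, leaving the gain-type identity
\[
W_\ell\Gamma(f,g)-\Gamma(f,W_\ell g)=\mu^{-1/2}\iint b(\cos\theta)\,\Phi_\gamma(|v-v_*|)\,\bigl(\sqrt{\mu}f\bigr)'_{*}\bigl(\sqrt{\mu}g\bigr)'\bigl(W_\ell(v)-W_\ell(v')\bigr)\,d\sigma\,dv_*.
\]

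Then I would invoke the standard mean-value bound
\[
|W_\ell(v)-W_\ell(v')|\lesssim \sin(\theta/2)\,|v-v_*|\,\bigl(\langle v\rangle^{\ell-1}+\langle v'\rangle^{\ell-1}\bigr),
\]
whose factor $\sin(\theta/2)$ absorbs exactly one power of the angular singularity $b(\cos\theta)\approx\theta^{-2-2s}$. After a Cauchy--Schwarz split that pairs the surviving angular weight against $h$ to produce the factor $|||h|||_{\Phi_\gamma}$, the other factor distributes the weight $\ell$ onto either $f$ or onto $g$, depending on whether $\langle v\rangle^{\ell-1}$ or $\langle v'\rangle^{\ell-1}$ is used in the expansion; a Bobylev-type pre/post-collisional change of variables $v_*\leftrightarrow v'_*$ converts the second option into the first and produces the symmetric sum $\|f\|_{L^2_{s+\gamma/2}}\|g\|_{L^2_{\ell+\gamma/2}}+\|g\|_{L^2_{s+\gamma/2}}\|f\|_{L^2_{\ell+\gamma/2}}$ on the right-hand side.

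This is precisely the argument of Proposition 2.17 of Part I, which is established under the weaker assumption $\gamma>-3$ and therefore covers the hard-potential range $\gamma+2s>0$. I would conclude by quoting that proposition directly, after verifying that the weight placement $s+\gamma/2$ versus $\ell+\gamma/2$ matches the statement needed here.

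The main obstacle, already handled inside Proposition 2.17, is the residual angular integrability: even after the factor $\sin(\theta/2)$ is extracted, the integrand $\theta^{-1-2s}$ is not angular-integrable once $s\ge 1/2$, so one must either symmetrize in $\sigma\mapsto-\sigma$ to produce a second-order increment, or absorb the remaining singularity through the built-in cancellation $h-h'$ of the non-isotropic norm $|||h|||_{\Phi_\gamma}$. Both devices are standard in the non-cutoff theory and are already carried out in Part I, so no further angular analysis is needed here.
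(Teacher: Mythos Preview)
Your proposal is correct and matches the paper's approach exactly: the paper does not prove this proposition independently but simply states that it ``follows from Proposition~2.17 in \cite{amuxy4-1},'' which is precisely what you invoke. Your additional sketch of the mechanism behind that proposition (cancellation of loss terms, mean-value expansion of $W_\ell(v)-W_\ell(v')$, and the handling of the residual angular singularity for $s\ge 1/2$) is accurate and goes beyond what the paper records here.
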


\bigskip

\section{Local existence}\label{section4}
\smallskip \setcounter{equation}{0}

First of all,  the Leibniz formula gives
\begin{equation*}
\partial^\beta\Gamma(f,\, g)=
\sum_{\beta_1+\beta_2+\beta_3=\beta}
C_{{\beta_1},\beta_2, \beta_3}
\cT(\partial^{\beta_1} f,\,
\partial^{\beta_2} g,\, \mu_{\beta_3}\,)\, ,
\end{equation*}
with
\begin{equation*}
\cT(F,\, G,\, \mu_{\beta_3}\,)=Q(\mu_{\beta_3}\, F,\, G)+ \iint
\Phi(|v-v_*|)b(\cos\theta) \Big((\mu_{\beta_3})_*\, -(\mu_{\beta_3})'_*\,\,\Big)F'_* G'
d v_* d \sigma\,,
\end{equation*}
where $\mu_{\beta_3}=p_{\beta_3}(v)\sqrt{\mu(v)}=\partial^{\beta_3}
(\sqrt\mu\,)\,$ is a Maxwellian type function of the variable $v$ in the sense
that it is a product of a polynomial and a Gaussian.
As noted in the previous parts in this series,
 one can check that $\cT(F,\, G,\, \mu_{\beta_3}\,)$ enjoys the same properties as $\Gamma(F,\, G)$ stated above. Therefore, we will apply those estimates obtained
for $\Gamma(F,\, G)$ to $\cT(F,\, G,\, \mu_{\beta_3}\,)$.

Define the norm associated with the collision operator in
the variables $(x, v)$ by setting  for $m\in\NN, \ell\in \RR$,
$$
\cB^m_\ell(\RR^6_{x, v})=\left\{ g\in\cS'(\RR^6_{x, v});\,\,
||g||^2_{\cB^m_\ell(\RR^6)}=\sum_{|\beta|\leq m}\int_{\RR^3_x}|||W_\ell\,
\partial^\beta_{x, v} g(x,\, \cdot\,)|||^2_{\Phi_\gamma}dx <+\infty\right\}\,.
$$
First of all, recall

\begin{lemm}\label{lemm3.1.1} (Lemma 4.1, \cite{amuxy4-1})
 For any $\ell\geq 0, \alpha, \beta\in  \NN^3$,
\begin{equation*}
||W_\ell\partial^{\alpha}_x\partial^{\beta}_v\,\,\pP
g||_{\cB^0_0(\RR^6)}+||\pP (W_\ell
\partial^{\alpha}_x\partial^{\beta}_v\,\, g)||_{\cB^0_0(\RR^6)}
\leq C_{\ell, \beta}||\partial^{\alpha}_x
g||^2_{L^2(\RR^6)},
\end{equation*}
\begin{equation*}
C_0||g||^2_{\cB^0_0(\RR^6)}-C_1||g||^2_{L^2(\RR^6)} \leq \Big(\cL
g,\, g\Big)_{L^2(\RR^6)}
 \lesssim |||g|||^2_{\cB^0_0(\RR^6)},
\end{equation*}
where $C_0$ and $C_1$ are some positive constants, and
\begin{equation*}
||g||^2_{L^2_{l+s+\frac{\gamma}{2}}(\RR^6)} +||g||^2_{L^2(\RR^3_x; H^s_{l+\frac{\gamma}{2}}(\RR^3_v))} \lesssim ||g||^2_{\cB^0_l(\RR^6)}\lesssim ||g||^2_{L^2(\RR^3_x;
H^{s}_{l+s+\frac{\gamma}{2}}(\RR^3_v))}.
\end{equation*}
Here $\pP$ is the projection to the null space $\cN$.
\end{lemm}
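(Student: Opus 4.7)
The plan is to derive the three assertions by integrating over $x$ the pointwise-in-$v$ estimates from Propositions~\ref{prop2.1} and~\ref{partI-prop2.2}, combined with the finite-dimensional structure of $\cN=\mathrm{span}\{\sqrt\mu,\,v_j\sqrt\mu,\,|v|^2\sqrt\mu\}$. I would tackle the norm-equivalence (third display) first, since it is the most direct: fixing $x$ and applying Proposition~\ref{partI-prop2.2} with $g$ replaced by $W_\ell g(x,\cdot)$ yields
\begin{equation*}
\|g(x,\cdot)\|^2_{L^2_{\ell+s+\gamma/2}}+\|g(x,\cdot)\|^2_{H^s_{\ell+\gamma/2}}
\lesssim |||W_\ell g(x,\cdot)|||^2_{\Phi_\gamma}
\lesssim \|g(x,\cdot)\|^2_{H^s_{\ell+s+\gamma/2}},
\end{equation*}
after absorbing $W_\ell$ into the isotropic velocity weights; an integration in $x$ then closes this part.

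For the coercivity (second display) I would work pointwise in $x$ with the decomposition $g(x,\cdot)=\pP g(x,\cdot)+(I-\pP)g(x,\cdot)$. Self-adjointness of $\cL$ and $\cL\pP=0$ give $(\cL g,g)_{L^2_v}=(\cL(I-\pP)g,(I-\pP)g)_{L^2_v}$, to which Proposition~\ref{prop2.1} applies directly and produces $|||(I-\pP)g(x,\cdot)|||^2$ as both a lower and an upper bound up to constants. To pass from $|||(I-\pP)g|||$ to $|||g|||$ on the lower side I would invoke the triangle inequality together with $|||\pP g(x,\cdot)|||\lesssim \|\pP g(x,\cdot)\|_{L^2_v}\le \|g(x,\cdot)\|_{L^2_v}$, the first inequality holding because $\pP g$ expands on the polynomial--Gaussian basis of $\cN$ with coefficients that are $L^2_v$-pairings of $g$ against Schwartz functions. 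After integrating in $x$ this produces the defect $-C_1\|g\|^2_{L^2}$. The upper bound uses the same control on $\pP g$ together with $\|g\|_{L^2_v}\lesssim \|g\|_{L^2_{s+\gamma/2}}\lesssim|||g|||$, which is valid since $\gamma+2s>0$ by Proposition~\ref{partI-prop2.2}.

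For the first display I would use the explicit expansion $\pP h=\sum_{i=0}^{4}\langle h,\psi_i\rangle_{L^2_v}\,\psi_i(v)$ with Schwartz profiles $\psi_i$. For $\pP(W_\ell\partial_x^\alpha\partial_v^\beta g)$ I would integrate by parts in $v$ so that $\partial_v^\beta$ and the weight $W_\ell$ fall on the $\psi_i$; for $W_\ell\partial_x^\alpha\partial_v^\beta\pP g$, the derivatives in $v$ and the weight $W_\ell$ act directly on the Schwartz profiles $\psi_i$. In either case one obtains a finite sum $\sum_i c_i(x)\,\varphi_i(v)$ with $\varphi_i$ still Schwartz in $v$ and $|c_i(x)|\lesssim \|\partial_x^\alpha g(x,\cdot)\|_{L^2_v}$ by Cauchy--Schwarz; bounding $|||\varphi_i|||$ by a constant through Proposition~\ref{partI-prop2.2} and integrating in $x$ delivers the claim. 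The only genuinely delicate step in the whole proof is the weight bookkeeping in the equivalence part, where the asymmetry between the exponents $\ell+s+\gamma/2$ and $\ell+\gamma/2$ has to be tracked carefully; the remaining work is immediate from Propositions~\ref{prop2.1}--\ref{partI-prop2.2} and the structural description of $\pP$.
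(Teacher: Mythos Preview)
The paper does not supply its own proof of this lemma; it is quoted verbatim as Lemma~4.1 of \cite{amuxy4-1} (Part~I) and used as a black box. Your argument is correct and is precisely the standard route one takes to establish such a result: the third display is Proposition~\ref{partI-prop2.2} applied to $W_\ell g(x,\cdot)$ and integrated in $x$; the second display is Proposition~\ref{prop2.1} applied to $(I-\pP)g(x,\cdot)$ together with the trivial bound $|||\pP g(x,\cdot)|||\lesssim \|g(x,\cdot)\|_{L^2_v}$ coming from the Schwartz nature of the basis of $\cN$; and the first display is the explicit expansion of $\pP$ on that basis, with integration by parts in $v$ to transfer $\partial_v^\beta$ and $W_\ell$ onto the Schwartz profiles. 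This is exactly how the result is obtained in Part~I, so there is nothing to add.

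One small remark: the weight bookkeeping you flag as ``genuinely delicate'' is in fact routine, since $W_\ell$ commutes exactly with the $L^2$ weights and the commutator $[W_\ell,\langle D_v\rangle^s]$ is lower order by standard pseudodifferential calculus; so the passage from $\|W_\ell g\|_{H^s_{\gamma/2}}$ to $\|g\|_{H^s_{\ell+\gamma/2}}$ costs nothing beyond an absorbable $L^2_{\ell}$ error.
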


We are now ready to prove the following estimate.

\begin{prop}\label{prop4.2}
Let $ \gamma+2s>0, N\geq 6, \ell> 3/2+2s+\gamma$. Then, for any $\beta\in\NN^6, |\beta|\leq N$,
\begin{equation*}
\left|\Big(W_\ell \partial^\beta_{x, v} \Gamma(f,\, g\,),\,\,
h\Big)_{L^2(\RR^6)}\right|\lesssim \Big\{ ||f||_{H^N_\ell(\RR^6)}\,\,
|| g||_{\cB^{N}_\ell(\RR^6)}+||f||_{\cB^N_\ell(\RR^6)}\,\,
|| g||_{H^{N}_\ell(\RR^6)}\Big\}\,\, || h||_{\cB^0_0(\RR^6)}.
\end{equation*}
\end{prop}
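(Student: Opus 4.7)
The plan is to start by applying the Leibniz formula displayed just above the proposition, writing $W_\ell\partial^\beta_{x,v}\Gamma(f,g)$ as a finite sum of terms $W_\ell\,\cT(\partial^{\beta_1}f,\partial^{\beta_2}g,\mu_{\beta_3})$ with $\beta_1+\beta_2+\beta_3=\beta$. As noted in the text preceding the proposition, the trilinear operator $\cT$ inherits from $\Gamma$ the estimates of Propositions~\ref{prop5.1}, \ref{prop5.2} and \ref{weight-comm} (the Maxwellian-type factor $\mu_{\beta_3}$ being absorbed into the universal constants), so it is enough to bound each summand separately.

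For each summand, I would next use the weight commutator estimate of Proposition~\ref{weight-comm} (applied to $\cT$) to pass $W_\ell$ inside:
\begin{equation*}
\bigl(W_\ell\,\cT(F,G,\mu_{\beta_3}),\,h\bigr)_{L^2_v}=\bigl(\cT(F,W_\ell G,\mu_{\beta_3}),\,h\bigr)_{L^2_v}+R,
\end{equation*}
with $F=\partial^{\beta_1}f$, $G=\partial^{\beta_2}g$. The remainder $R$ is controlled pointwise in $x$ by products of weighted $L^2_v$-norms of $F$ and $G$ against $|||h|||_{\Phi_\gamma}$; after integration in $x$ and Sobolev embedding on whichever of $|\beta_1|,|\beta_2|$ is smaller, this piece is bounded by $\|f\|_{H^N_\ell}\|g\|_{H^N_\ell}\|h\|_{\cB^0_0(\RR^6)}$, which is routine.

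For the main term I would apply Proposition~\ref{prop5.1} pointwise in $x$, obtaining
\begin{equation*}
\bigl\{\|\partial^{\beta_1}f\|_{L^2_{s+\gamma/2}}\,|||W_\ell\partial^{\beta_2}g|||+\|W_\ell\partial^{\beta_2}g\|_{L^2_{s+\gamma/2}}\,|||\partial^{\beta_1}f|||\bigr\}\,|||h|||_{\Phi_\gamma},
\end{equation*}
and split into cases according to whether $|\beta_1|$ or $|\beta_2|$ is at most $N/2\le 3$. In either case, Cauchy--Schwarz in $x$ produces $\|h\|_{\cB^0_0(\RR^6)}$ from the $|||h|||$ factor, while the high-derivative factor is placed in $L^2_x$: a $|||\cdot|||_{\Phi_\gamma}$ factor becomes $\|\cdot\|_{\cB^N_\ell}$ (using $\|\cdot\|_{\cB^N_0}\lesssim\|\cdot\|_{\cB^N_\ell}$ for $\ell\ge s$, a consequence of Proposition~\ref{partI-prop2.2}), while an $L^2_{s+\gamma/2}$ factor becomes $\|\cdot\|_{H^N_\ell}$ (since $\ell\ge s+\gamma/2$). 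The low-derivative factor is placed in $L^\infty_x$ via $H^2(\RR^3_x)\hookrightarrow L^\infty(\RR^3_x)$, which costs $|\beta_i|+2\le 5\le N$ derivatives and is absorbed by either $\|f\|_{H^N_\ell}$ or $\|g\|_{\cB^N_\ell}$.

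The main obstacle is the balanced borderline case $|\beta_1|=|\beta_2|=3$, where both factors simultaneously consume their Sobolev derivative budget; the hypothesis $N\ge 6$ is tight precisely because $3+2=5\le N$ is required on each side. When Proposition~\ref{prop5.2} must be invoked instead (for instance to substitute $\|W_\ell\partial^{\beta_2}g\|_{L^\infty_v}$ for $|||W_\ell\partial^{\beta_2}g|||$ in configurations where no further $v$-derivative budget on $g$ is available), the weight assumption $\ell>3/2+2s+\gamma$ plays its role: choosing $\epsilon>0$ small enough one has $\|\partial^{\beta_1}f\|_{L^2_{3/2+2s+\gamma+\epsilon}}\lesssim\|\partial^{\beta_1}f\|_{L^2_\ell}$, which is absorbed by $\|f\|_{H^N_\ell}$, thereby matching the conclusion.
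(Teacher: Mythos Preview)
Your overall architecture matches the paper's proof: Leibniz expansion into $\cT$-terms, the weight commutator from Proposition~\ref{weight-comm} to move $W_\ell$ onto $\partial^{\beta_2}g$, and then a case split on the size of $|\beta_1|$. The commutator piece and the main term when $|\beta_1|$ is small both go through as you describe (though note that the commutator bound from Proposition~\ref{weight-comm} carries a weight $\ell+\gamma/2$ on one factor, so for $\gamma>0$ it is not literally $\|f\|_{H^N_\ell}\|g\|_{H^N_\ell}$ but rather already needs one $\cB^N_\ell$-norm; this is harmless since it still matches the conclusion).

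There is, however, a genuine gap in your treatment of the case where $|\beta_1|$ is large. Applying Proposition~\ref{prop5.1} you obtain, among other terms,
\[
\|W_\ell\partial^{\beta_2}g\|_{L^2_{s+\gamma/2}(\RR^3_v)}\,|||\partial^{\beta_1}f|||_{\Phi_\gamma}.
\]
The high-derivative factor $|||\partial^{\beta_1}f|||$ goes in $L^2_x$ and lands in $\|f\|_{\cB^N_\ell}$, as you say. But the low-derivative factor is $\|\partial^{\beta_2}g\|_{L^2_{\ell+s+\gamma/2}}$, carrying total weight $\ell+s+\gamma/2>\ell$; this is \emph{not} controlled by $\|g\|_{H^N_\ell}$, only by $\|g\|_{\cB^N_\ell}$ (via Lemma~\ref{lemm3.1.1}). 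The resulting product is therefore $\|f\|_{\cB^N_\ell}\|g\|_{\cB^N_\ell}$, which does not fit the asymmetric right-hand side of the proposition and would break the energy scheme downstream. Your claim that ``an $L^2_{s+\gamma/2}$ factor becomes $\|\cdot\|_{H^N_\ell}$ since $\ell\ge s+\gamma/2$'' overlooks the extra $W_\ell$ already sitting on $g$.

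This is precisely why the paper switches to Proposition~\ref{prop5.2} (not Proposition~\ref{prop5.1}) when $|\beta_1|>N-3$. The point of Proposition~\ref{prop5.2} is not to replace $|||W_\ell\partial^{\beta_2}g|||$ by $\|W_\ell\partial^{\beta_2}g\|_{L^\infty_v}$ as you suggest; it is to replace the problematic factor $\|W_\ell\partial^{\beta_2}g\|_{L^2_{s+\gamma/2}}$ (paired with $|||f|||$) by $\|W_\ell\partial^{\beta_2}g\|_{L^\infty_v}$. Since $|\beta_2|\le 2$, Sobolev embedding in both $x$ and $v$ then absorbs this into $\|g\|_{H^N_\ell}$, producing $\|f\|_{\cB^N_\ell}\|g\|_{H^N_\ell}$ as required. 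The first term of Proposition~\ref{prop5.2} is where $\ell>3/2+2s+\gamma$ is used, and that part of your explanation is correct.
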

\begin{proof}
By using the  Leibniz formula, we have
\begin{align*}
&\Big(W_\ell \partial^\beta\Gamma(f,\, g),\, h\Big)_{L^2(\RR^6)}=
\sum_{\beta_1+\beta_2+\beta_3=\beta}
C_{\beta_1, \beta_2, \beta_3}\Big\{
\Big(\cT(\partial^{\beta_1} f,\,
W_\ell\,\partial^{\beta_2} g,\, \mu_{\beta_3}\,),\, h\Big)\,\\
&+\Big(W_\ell\,\cT(\partial^{\beta_1} f,\,
\partial^{\beta_2} g,\, \mu_{\beta_3}\,)-\cT(\partial^{\beta_1} f,\,
W_\ell\,\partial^{\beta_2} g,\, \mu_{\beta_3}\,),\, h\Big)
\Big\}\, .
\end{align*}
If $ |\beta_1|\leq N-3$, we get {}from Proposition \ref{prop5.1} that
\begin{align*}
& \left|\Big(\cT(\partial^{\beta_1} f,\, W_{\ell}\partial^{\beta_2}
g,\, \mu_{\beta_3}\,)
,\,\, h\Big)_{L^2(\RR^6)}\right|\\
&\lesssim \left(\int_{\RR^3_x}\Big( \|\partial^{\beta_1}
f\|^2_{L^2_{\gamma/2+s}(\RR^3_v)}|||W_\ell \partial^{\beta_2} g|||^2_{\Phi_\gamma}+\|\partial^{\beta_1} f||^2_{\Phi_\gamma}\|W_\ell\,\partial^{\beta_2} g||^2_{L^2_{s+\gamma/2}(\RR^3_v)}\Big)dx
\right)^{1/2}||h||_{\cB^0_0(\RR^6)}
\\
&\lesssim \Big(\|\partial^{\beta_1}f\|^2_{L^\infty(\RR^3_x;\,L^2_{\gamma/2+s}(\RR^3_v))}
||W_\ell\, \partial^{\beta_2} g||^2_{\cB^0_0(\RR^6)}
+||\partial^{\beta_1} f||^2_{L^\infty(\RR^3_x;\,H^s_{\gamma+2s}(\RR^3_v))}
||W_\ell \partial^{\beta_2} g||^2_{L^2_{s+\gamma/2}(\RR^6)}\Big)^{1/2}
||h||_{\cB^0_0(\RR^6)}
\\
&\lesssim \| f\|_{H^{N}_{\gamma+2s}(\RR^6)}
|||g|||_{\cB^{N}_\ell(\RR^6)} ||h||_{\cB^0_0(\RR^6)}.
\end{align*}
On the other hand, if $|\beta_1|>N-3$, then $|\beta_2|\leq 2\leq N-4$.
In this case,  Proposition \ref{prop5.2} implies,
\begin{align*}
& \left|\Big(\cT(\partial^{\beta_1} f,\, W_\ell\,\partial^{\beta_2}
g,\, \mu_{\beta_3}\,)
,\,\, h\Big)_{L^2(\RR^6)}\right|\\
&\lesssim ||h||_{\cB^0_0(\RR^6)} \Big( \|f\|_{H^{|\beta_1\,|}_{3/2+2s+\gamma+\epsilon}(\RR^6)} ||
W_\ell\,\partial^{\beta_2} g||_{L^\infty(\RR^3_x;\, H^{\max\{2s, 1\}}_{s+\gamma/2}(\RR^3_v))}
+ \|W_\ell\,\partial^{\beta_2} g\|_{L^\infty(\RR^6)} ||\partial^{\beta_1}
f||_{\cB^0_0}\Big)  \\
&\lesssim \Big(\| f\|_{H^{N}_{3/2+2s+\gamma+\epsilon}(\RR^6)} \,||W_\ell\, \partial^{\beta_2}
g||_{H^{\max\{2s, 1\}+3/2+\epsilon}_{s+\gamma/2}(\RR^6)}
+\| f\|_{\cB^{|\beta_1|}_0} \,||W_\ell\, \partial^{\beta_2}
g||_{H^{3+\epsilon}(\RR^6)}\Big)
 ||h||_{\cB^0_0(\RR^6)}\\
& \lesssim \Big(\| f\|_{H^{N}_\ell(\RR^6)} \,||
g||_{\cB^N_\ell}
+\| f\|_{\cB^{|\beta_1|}_0} \,||g||_{H^N_\ell(\RR^6)}\Big)
 ||h||_{\cB^0_0(\RR^6)}\, .
\end{align*}
Finally, Proposition \ref{weight-comm} yields
\begin{align*}
&\Big|\big(W_\ell\,\cT(\partial^{\beta_1} f,\,
\partial^{\beta_2} g,\, \mu_{\beta_3}\,)-
\cT(\partial^{\beta_1} f,\,
W_\ell\,\partial^{\beta_2} g,\, \mu_{\beta_3}\,),
\, h\big)_{L^2(\RR^6)}\Big|\\
& \lesssim \Big(\| f\|_{H^{N}_\ell(\RR^6)} \,||
g||_{\cB^N_\ell}
+\| f\|_{\cB^{N}_\ell} \,||g||_{H^N_\ell(\RR^6)}\Big)
 ||h||_{\cB^0_0(\RR^6)}\, .
\end{align*}
The combination of the above estimates completes
the proof of the proposition.
\end{proof}

For the linear operator $\cL_2$, Proposition 4.5 of \cite{amuxy4-1}
and the commutator estimate
give
\begin{prop}\label{prop4.4}
We have for any $\beta\in\NN^6$,
\begin{equation*}
\left|\Big(W_\ell \partial^\beta_{x, v} \cL_2(f),\,\,
h\Big)_{L^2(\RR^6)}\right|\leq C_{\ell,\, |\beta|} ||f||_{H^{|\beta\,|}_\ell(\RR^6)}\,\, ||\mu^{1/10^4}
h||_{L^2(\RR^6)}\, .
\end{equation*}
\end{prop}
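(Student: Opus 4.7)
The plan is to reduce the inequality to Proposition 4.5 of Part I by distributing the derivatives $\partial^\beta_{x,v}$ with a Leibniz-type expansion, and then sliding the weight $W_\ell$ across $\cL_2$ by a commutator argument. The essential point is that $\cL_2$ is an $x$-independent operator whose integral kernel carries a Maxwellian factor in the velocity variables; any polynomial growth produced by differentiation or by commutation with $W_\ell$ is therefore absorbed by the Gaussian.

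First I would split $\beta=(\beta_x,\beta_v)$. Since $\cL_2$ is $x$-independent, $\partial^{\beta_x}_x$ commutes with $\cL_2$ and gives $\cL_2(\partial^{\beta_x}_x f)$. For the $v$-derivatives, I would apply Leibniz to the bilinear integral representation of $\cL_2$, whose kernel contains factors of $\sqrt{\mu}$: each $v$-derivative may fall either on $f$ or on a $\sqrt{\mu}$ factor, and $\partial^\gamma \sqrt{\mu}$ is again polynomial times Gaussian. Thus
$$\partial^\beta_{x,v}\cL_2(f)=\sum_{|\alpha|\le|\beta|} C_{\alpha,\beta}\,\tilde{\cL}_2^{(\beta-\alpha)}\bigl(\partial^\alpha_{x,v} f\bigr),$$
where each $\tilde{\cL}_2^{(\beta-\alpha)}$ is an operator of the same type as $\cL_2$ (only the Maxwellian factor is replaced by a polynomial multiple of it), and so retains strong Gaussian decay in $v$ and $v_*$.

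Next I would pass $W_\ell$ across each $\tilde{\cL}_2^{(\beta-\alpha)}$ via
$$W_\ell\,\tilde{\cL}_2^{(\beta-\alpha)}(g)=\tilde{\cL}_2^{(\beta-\alpha)}(W_\ell g)+\bigl[W_\ell,\,\tilde{\cL}_2^{(\beta-\alpha)}\bigr](g).$$
The commutator acts as an integral operator whose kernel is multiplied by $W_\ell(v)-W_\ell(v_*)$, a factor of at most polynomial growth in $|v|,|v_*|$; this is swallowed by the Gaussian in the kernel, so the commutator satisfies the same type of estimate as $\tilde{\cL}_2^{(\beta-\alpha)}$ itself. Then, for fixed $x$, Proposition 4.5 of Part I gives pointwise bounds
$$\bigl|\bigl(\tilde{\cL}_2^{(\beta-\alpha)}(W_\ell \partial^\alpha f)(x,\cdot),\,h(x,\cdot)\bigr)_{L^2(\RR^3_v)}\bigr|\lesssim \|W_\ell\,\partial^\alpha f(x,\cdot)\|_{L^2(\RR^3_v)}\,\|\mu^{1/10^4}h(x,\cdot)\|_{L^2(\RR^3_v)},$$
and similarly for the commutator piece. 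Integrating in $x$ with Cauchy--Schwarz and summing the finite Leibniz expansion gives exactly the claimed estimate.

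The main, and rather mild, obstacle is the bookkeeping step of verifying that every operator $\tilde{\cL}_2^{(\beta-\alpha)}$ and every commutator $[W_\ell,\tilde{\cL}_2^{(\beta-\alpha)}]$ produced by the expansion still falls within the scope of Proposition 4.5 of Part I --- that is, that the modified kernel retains the Gaussian decay the proposition requires. Given the purely Maxwellian structure of $\cL_2$, this is routine: every differentiation of $\sqrt{\mu}$ or of $W_\ell$ produces only a polynomial factor, and the surviving Gaussian absorbs it without loss.
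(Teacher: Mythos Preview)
Your proposal is correct and follows essentially the same approach as the paper. The paper offers only a one-line justification --- ``Proposition 4.5 of \cite{amuxy4-1} and the commutator estimate give'' --- and your Leibniz expansion for the $v$-derivatives together with the weight commutator $[W_\ell,\tilde{\cL}_2^{(\beta-\alpha)}]$, followed by the pointwise-in-$x$ application of Proposition 4.5 and Cauchy--Schwarz in $x$, is precisely the natural elaboration of that sentence.
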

By using the  interpolation inequalities
\begin{align*}
\|g\|_{H^{N}_{\ell+\gamma/2}}&\leq \epsilon \|g\|_{H^{N}_{\ell+\gamma/2+s}}
+C_\epsilon \|g\|_{H^{N}_\ell},
\end{align*}
for any small constant $\epsilon$, the following proposition follows from the same argument given in Proposition
4.8 of \cite{amuxy4-1}.

\begin{prop}\label{prop3.2.4}
Let $\gamma+2s>0, \beta\in\NN^6, |\beta|>0, \ell\geq 0$.
Then
\begin{align*}
&\left|\Big(\cL_1(W_\ell\,\partial^\beta_{x, v} g)-
\,W_\ell\, \partial^\beta_{x, v} \cL_1( g ),\,\,
h\Big)_{L^2(\RR^6)}\right|\\
&\qquad\lesssim \Big(||g||_{H^{|\beta|}_\ell(\RR^6)}+
||g||_{\cB^{|\beta|-1}_\ell(\RR^6)}
\Big)\,\, ||h||_{\cB^0_0(\RR^6)}\,\notag ,
\end{align*}
and for any $\epsilon>0$ there exists
a constant $C_\epsilon>0$ such that
\begin{align*}
&\left|\Big(\cL_1(W_\ell\, g)-
\,W_\ell\, \cL_1( g ),\,\,h\Big)_{L^2(\RR^6)}\right|\lesssim ||g||_{L^2_{\ell+\gamma/2}(\RR^6)}\,||h||_{\cB^0_0(\RR^6)}\\
&\qquad\leq\Big\{\epsilon||g||_{L^2_{\ell+\gamma/2+s}(\RR^6)} + C_\epsilon||g||_{L^2_{\ell}(\RR^6)}\Big\}\,
||h||_{\cB^0_0(\RR^6)}\notag\\
&\qquad\leq\Big\{\epsilon {||g||_{\cB^0_{\ell}(\RR^6)} } + C_\epsilon||g||_{L^2_{\ell}(\RR^6)}\Big\}\,
||h||_{\cB^0_0(\RR^6)}\,\notag .
\end{align*}

\end{prop}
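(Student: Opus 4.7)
The plan is to exploit the fact that, up to sign, $\cL_1 g$ has the structural form $\Gamma(\sqrt\mu, g)$ (the coercive piece of the linearized operator), so that every commutator of $\cL_1$ with either a weight or an $(x,v)$-derivative can be reduced to a commutator estimate for $\Gamma$ applied to arguments built from $\sqrt\mu$. All the heavy analytic work has already been packaged in Proposition \ref{weight-comm} and Propositions \ref{prop5.1}--\ref{prop5.2}, so the proof is essentially bookkeeping with these ingredients, following the scheme of Proposition 4.8 in Part I.

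For the first inequality (the case $|\beta|>0$), I would apply the Leibniz formula to $\partial^\beta_{x,v}\cL_1 g = -\partial^\beta_{x,v}\Gamma(\sqrt\mu, g)$, using the operator $\cT$ already introduced in Section 3, to write
\begin{equation*}
\partial^\beta_{x,v}\cL_1 g - \cL_1(\partial^\beta_{x,v} g)
= -\cT(\sqrt\mu,\partial^\beta_{x,v} g,\mu_0) + \cL_1(\partial^\beta_{x,v} g)
+ \text{(terms with $|\beta_2|<|\beta|$)},
\end{equation*}
where $\mu_{\beta_3}$ is a Maxwellian-type factor. Commuting then with $W_\ell$ splits the expression into two kinds of pieces: pieces where the derivative hits a Maxwellian factor (so one still differentiates $g$ at most $|\beta|-1$ times), which are estimated by Proposition \ref{prop5.1} applied to $\cT$ with the Maxwellian first slot providing all needed velocity weights, giving the $\|g\|_{\cB^{|\beta|-1}_\ell}$ contribution; and a pure weight-commutator piece $W_\ell\,\cT(\sqrt\mu,\partial^\beta g,\mu_0)-\cT(\sqrt\mu,W_\ell\partial^\beta g,\mu_0)$, which is controlled by Proposition \ref{weight-comm} and yields the $\|g\|_{H^{|\beta|}_\ell}$ contribution. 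All the output velocity norms on $h$ appearing in these propositions are bounded by $|||h|||_{\Phi_\gamma}$, so integrating in $x$ and using Cauchy--Schwarz produces the factor $\|h\|_{\cB^0_0(\RR^6)}$.

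For the second inequality (the pure weight commutator at $\beta=0$), the same decomposition collapses to a single application of Proposition \ref{weight-comm} with $f=\sqrt\mu$. Since all weighted $L^2$-norms of $\sqrt\mu$ are finite, one obtains
\begin{equation*}
\bigl|\bigl(\cL_1(W_\ell g)-W_\ell\cL_1 g,\,h\bigr)_{L^2(\RR^6)}\bigr|
\lesssim \|g\|_{L^2_{\ell+\gamma/2}(\RR^6)}\,\|h\|_{\cB^0_0(\RR^6)}.
\end{equation*}
The stated interpolation inequality (with the role of $H^N$ replaced by $\cB^0$) then splits $\|g\|_{L^2_{\ell+\gamma/2}}$ into an $\epsilon\|g\|_{L^2_{\ell+\gamma/2+s}}$ term absorbed into $\|g\|_{\cB^0_\ell}$ via the lower bound in Lemma \ref{lemm3.1.1}, plus the lower-order $C_\epsilon\|g\|_{L^2_\ell}$ term.

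The only real obstacle is the bookkeeping of velocity weights: one must verify that in every Leibniz term the factor $\mu_{\beta_3}$ carries enough decay for the $L^\infty_x L^2_v$ embedding used on the low-derivative slot, and that the output weights on the test function $h$ never exceed those controlled by $|||h|||_{\Phi_\gamma}$. Because $\mu_{\beta_3}$ is Schwartz and $\sqrt\mu$ dominates any polynomial weight, this is automatic, and no new estimates beyond those cited are required.
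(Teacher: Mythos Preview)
Your proposal is correct and follows essentially the same approach as the paper. The paper itself offers no detailed proof here, merely pointing to Proposition~4.8 of Part~I together with the interpolation inequality $\|g\|_{H^N_{\ell+\gamma/2}}\le \epsilon\|g\|_{H^N_{\ell+\gamma/2+s}}+C_\epsilon\|g\|_{H^N_\ell}$; the later discussion of the commutator pieces $I$ (weight commutator) and $II$ (derivative commutator via Leibniz on $\cT$) in the $M_4$ estimate of Section~4 confirms exactly the two-piece decomposition you describe, with Proposition~\ref{weight-comm} handling the weight part and the trilinear bounds of Propositions~\ref{prop5.1}--\ref{prop5.2} handling the lower-order Leibniz terms.
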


We are now ready to show the local existence of solutions in
some weighted Sobolev spaces. Consider the following
Cauchy problem for a linear Boltzmann equation with a given
function $f$,
\begin{equation}\label{4.0.1}
\partial_t g + v\,\cdot\,\nabla_x g + \cL_1 g = \Gamma (f,\,g) -\cL_2 f
,\qquad g|_{t=0} = g_0\,,
\end{equation}
which is equivalent to the problem:
\begin{equation*}
\partial_t G + v\,\cdot\,\nabla_x G=
Q(F,\,G) ,\qquad G|_{t=0} = G_0,
\end{equation*}
with $F=\mu+\sqrt\mu\,f$ and $G=\mu+\sqrt\mu\,g$.

We shall now study the
energy estimates on (\ref{4.0.1}) in the function space
$H^N_\ell(\RR^6)$ for a smooth function $g$.
For $N\geq 6, \ell> 3/2+2s+\gamma$  and $\beta\in\NN^6,
|\beta|\leq N$, taking
$$
\varphi(t, x, v)= (-1)^{|\beta|}\partial^\beta_{x, v}
W_{2\ell}\partial^\beta_{x, v} g(t, x, v),
$$
as a test function on $\RR^3_x\times\RR^3_v$, we get
\begin{align*}
&\frac 12 \frac{d}{d t}\|\partial^\beta\,g\|^2_{L^2_\ell(\RR^6)}+
\Big(W_\ell\Big[\partial^\beta,\, v\,\Big]\,\cdot\,\nabla_x
g,\, W_\ell\partial^\beta g\Big)_{L^2(\RR^6)}+
\Big(W_\ell\partial^\beta \cL_1(g),\,
W_\ell\partial^\beta g\Big)_{L^2(\RR^6)}\\
&=\Big(W_\ell\partial^\beta \Gamma(f, \, g),\,
W_\ell\partial^\beta g\Big)_{L^2(\RR^6)}-
\Big(\partial^\beta \cL_2(f),\,
W_{2\ell}\partial^\beta g\Big)_{L^2(\RR^6)},
\end{align*}
where we have used the fact that
$$
\left( v\,\cdot\,\nabla_x \Big(W_\ell\partial^\beta\,
g\Big),\,W_\ell\partial^\beta g\right)_{L^2(\RR^6)}=0\, .
$$
Applying now Propositions \ref{prop4.2}, \ref{prop4.4}  and
\ref{prop3.2.4}, we get for $N\geq 6, \ell>3/2+2s+\gamma$ and $|\beta|\leq N$,
\begin{align*}
&\frac 12 \frac{d}{d t}\|\partial^\beta\,g\|^2_{L^2_\ell(\RR^6)}+
\Big(\cL_1\Big(W_\ell\,\partial^\beta_{x, v} g\Big),\,
W_\ell\,\partial^\beta_{x, v}g\Big)_{L^2(\RR^6)}\\
&\lesssim \Big\{ \|f\|_{H^N_\ell(\RR^6)} \,
||g||^2_{\cB^N_\ell(\RR^6)}+
\|f\|_{\cB^N_\ell(\RR^6)} \,\|g\|_{H^N_\ell(\RR^6)}
||g||_{\cB^{N}_\ell(\RR^6)}\\
&\qquad+ \|g\|^2_{H^{N}_\ell(\RR^6)}+
\|f\|^2_{H^N_\ell(\RR^6)} + \|g\|_{\cB^{N-1}_\ell(\RR^6)}
||g||_{\cB^N_\ell(\RR^6)}\Big\}.
\end{align*}
By induction on $\beta$ from
$|\beta|= 1$ to $N$,  the Cauchy-Schwarz inequality implies that
\begin{align*}
\frac{d}{d t}\|g\|^2_{H^N_\ell(\RR^6)}+\frac{C_0}{4}
||g||^2_{\cB^N_\ell(\RR^6)}&\lesssim \Big\{
\|f\|_{H^N_\ell(\RR^6)} \,
||g||^2_{\cB^N_\ell(\RR^6)}+
\|g\|^2_{H^N_\ell(\RR^6)} \,
||f||^2_{\cB^N_\ell(\RR^6)}\\
&\qquad+ \|g\|^2_{H^N_\ell(\RR^6)}+
\|f\|^2_{H^N_\ell(\RR^6)}+
\|g\|^2_{\cB^0_\ell(\RR^6)} \Big\}\, .
\end{align*}
On the other hand, taking $\beta=0$, we have
\begin{align*}
&\frac 12 \frac{d}{d t}\|g\|^2_{L^2_\ell(\RR^6)}+
\Big(\cL_1\Big(W_\ell\,g\Big),\,
W_\ell\,g\Big)_{L^2(\RR^6)}\\
&\lesssim \Big\{ \|f\|_{H^3_\ell(\RR^6)} \,
||g||^2_{\cB^0_\ell(\RR^6)}+
\|f\|_{\cB^3_\ell(\RR^6)} \,\|g\|_{L^2_\ell(\RR^6)}
||g||_{\cB^{0}_\ell(\RR^6)}\\
&\qquad+ \|g\|^2_{L^{2}_\ell(\RR^6)}+
\|f\|^2_{L^2_\ell(\RR^6)} +\epsilon\,
||g||^2_{\cB^0_\ell(\RR^6)}
\Big\}\,,
\end{align*}
which together with the coercivity estimate implies that
\begin{align*}
&\frac 12 \frac{d}{d t}\|g\|^2_{L^2_\ell(\RR^6)}+
\frac{C_0}{4}||g||^2_{\cB^0_\ell(\RR^6)}
\lesssim \Big\{ \|f\|_{H^3_\ell(\RR^6)} \,
||g||^2_{\cB^0_\ell(\RR^6)}\\
&\qquad\qquad+
\|f\|^2_{\cB^3_\ell(\RR^6)} \,\|g\|^2_{L^2_\ell(\RR^6)}
+ \|g\|^2_{L^{2}_\ell(\RR^6)}+
\|f\|^2_{L^2_\ell(\RR^6)}
\Big\}\, .
\end{align*}
In summary, we have shown that there exists a constant $\eta_0>0$ such that
for $N\geq 6, \ell>3/2+2s+\gamma$,
\begin{align*}
&\frac{d}{d t}\|g\|^2_{H^N_\ell(\RR^6)}+\eta_0
||g||^2_{\cB^N_\ell(\RR^6)}\lesssim \Big\{
\|f\|_{H^N_\ell(\RR^6)} \,
||g||^2_{\cB^N_\ell(\RR^6)}\\
&\qquad\qquad+
\|g\|^2_{H^N_\ell(\RR^6)} \,
||f||^2_{\cB^N_\ell(\RR^6)}+ \|g\|^2_{H^N_\ell(\RR^6)}+
\|f\|^2_{H^N_\ell(\RR^6)} \Big\}\, .\nonumber
\end{align*}

With the above differential inequality, the same argument for the
soft potential applies and it leads to the following theorem.

\begin{theo}\label{theo4.4}
Let $0<s<1$, $\gamma+2s>0, N\geq 6, \ell>3/2+2s+\gamma$. Assume that $g_0\in H^N_\ell(\RR^6)$ and
$f\in L^\infty([0, T];\,H^N_\ell(\RR^6))\bigcap L^2([0, T];\, \cB^N_\ell(\RR^6))$.  If $g$
$\in$ $ L^\infty([0,
T];\,H^N_\ell(\RR^6))\bigcap L^2([0, T];$ $\, \cB^N_\ell(\RR^6))$ is a
solution of the Cauchy problem (\ref{4.0.1}), then there exists
$\epsilon_0>0$ such that if
\begin{equation*}
\|f\|^2_{L^\infty([0, T];\,H^N_\ell(\RR^6))}+\|f\|^2_{L^2([0, T];\,\cB^N_\ell(\RR^6))}\leq \epsilon^2_0,
\end{equation*}
we have
\begin{equation*}
\|g\|^2_{L^\infty([0, T];\,H^N_\ell(\RR^6))}+ ||g||^2_{L^2([0,
T];\,\cB^N_\ell(\RR^6))}\leq Ce^{C\,
T}\Big(\|g_0\|^2_{H^N_\ell(\RR^6)}+\epsilon_0^2 T\Big),
\end{equation*}
for a constant $C>0$ depending only on $N, \ell$ .
\end{theo}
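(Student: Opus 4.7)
The plan is to obtain Theorem \ref{theo4.4} as a direct consequence of the master differential inequality derived immediately above the theorem statement, together with Gronwall's lemma and the smallness hypothesis on $f$.

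First I would choose $\epsilon_0 > 0$ small enough so that whenever $\|f(t)\|_{H^N_\ell} \leq \epsilon_0$, the factor $C\|f\|_{H^N_\ell}$ multiplying $\|g\|^2_{\cB^N_\ell}$ on the right-hand side is dominated by $\eta_0/2$ and can be absorbed into the dissipation term on the left. This reduces the differential inequality to
\begin{equation*}
\frac{d}{dt}\|g\|^2_{H^N_\ell(\RR^6)} + \frac{\eta_0}{2}\|g\|^2_{\cB^N_\ell(\RR^6)}
\lesssim \bigl(1 + \|f\|^2_{\cB^N_\ell(\RR^6)}\bigr)\|g\|^2_{H^N_\ell(\RR^6)} + \|f\|^2_{H^N_\ell(\RR^6)}.
\end{equation*}
It is important here that $\epsilon_0$ depends only on the absolute constants $C$ and $\eta_0$, and not on $T$, so that the absorption is uniform in time.

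Next I would apply Gronwall's lemma to $y(t) = \|g(t)\|^2_{H^N_\ell}$ with the integrable weight $A(t) = C(1 + \|f(t)\|^2_{\cB^N_\ell})$. Using the smallness bound $\int_0^T \|f\|^2_{\cB^N_\ell}\,dt \leq \epsilon_0^2$, one gets
\begin{equation*}
\|g(t)\|^2_{H^N_\ell(\RR^6)} \leq e^{C t + C\epsilon_0^2}\Bigl(\|g_0\|^2_{H^N_\ell(\RR^6)} + C\int_0^t \|f(s)\|^2_{H^N_\ell}\,ds\Bigr) \leq C e^{C T}\bigl(\|g_0\|^2_{H^N_\ell(\RR^6)} + \epsilon_0^2 T\bigr),
\end{equation*}
where the factor $e^{C\epsilon_0^2}$ is absorbed into the multiplicative constant. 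Integrating the reduced differential inequality over $[0,T]$ and feeding in this pointwise bound on $\|g(t)\|^2_{H^N_\ell}$ then controls $\int_0^T \|g\|^2_{\cB^N_\ell}\,dt$ by the same right-hand side, which yields the desired combined estimate.

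Since the theorem is formulated as an a priori bound for a solution that is assumed to exist in the stated class, no fixed-point or approximation argument is required at this stage; the substantive analytic work has already been performed in the propositions of Section~\ref{section2} and in the derivation of the master inequality. The main technical point to watch, and the only place where one must be careful, is to ensure that $\epsilon_0$ is chosen purely in terms of the constants appearing in Propositions \ref{prop4.2}, \ref{prop4.4} and \ref{prop3.2.4}, so that both the absorption step and the Gronwall step produce constants independent of $T$; the $T$-dependence then enters only through the exponential factor $e^{CT}$ and the linear factor $\epsilon_0^2 T$ in the final bound.
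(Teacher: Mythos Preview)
Your proposal is correct and matches the paper's approach: the paper simply states that ``with the above differential inequality, the same argument for the soft potential applies,'' which is precisely the absorption-plus-Gronwall argument you spell out. Your handling of the absorption of $C\|f\|_{H^N_\ell}\|g\|^2_{\cB^N_\ell}$ into the dissipation and the subsequent time integration to control $\int_0^T\|g\|^2_{\cB^N_\ell}$ is exactly what is intended.
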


And this
yields the local existence of solution by the contraction mapping theorem
through the
standard argument. Therefore, we omit the details for the brevity of the
paper.

\section{Global Existence}

In this section, we derive a global energy estimate for  the solution
in the weighted function spaces. In the soft potential case considered in Part I,
we could
obtain two types of global energy estimates, one for only
 $x$ derivatives without requiring any weight in the variable
$v$ and one for both $x$ and $v$ derivatives with weight in $v$ whose order
varies with the order of $v$ derivative. On the other hand,
in the hard potential case,  the energy estimate can be closed only when
 both $x$ and $v$ derivatives
are taken into account together with weight in $v$. This is due to the
upper bound estimate on the nonlinear collision operator given in
Section 2 where some weighted norms are used.  However, the order
of  weight can be fixed in contrast to the case of soft potential.


Set

\begin{align*}
\cE_{N,\ell}&= \|g\|_{H^N_\ell(\RR^6)}^2\sim
\|g_1\|^2_{{ H}^N_\ell(\RR^6)}+\|g_2\|^2_{{ H}^N_\ell(\RR^6)}\sim \|\cA\|_{H^{N}(\RR^3)}^2
+\|g_2\|^2_{{ H}^N_\ell(\RR^6)},
\\ \cD_{N,\ell}&=\|\nabla_x g_1\|^2_{{ H}^{N-1}(\RR^6)} +\|g_2\|_{{\cB}^N_\ell(\RR^6)}^2  \sim\|\nabla_x\cA\|^2_{H^{N-1}(\RR^3)}+\|g_2\|_{{\cB}^N_\ell(\RR^6)}^2,
\end{align*}
where $\cA=(a,b,c)$ with $\pP g=g_1=(a+v\cdot b+|v|^2 c)\mu^{\frac 12}$, and $g_2=g-\pP g$.

Let  $g=g(t,x,v)$ be a smooth solution to
the Cauchy problem \eqref{4-3-1.3}.
The main goal of this section is to establish
\begin{prop} \label{energy}{\rm \bf (Energy Estimate) \ }
Assume
$0<s<1$ and $2s+\gamma> 0$, and
\label{energyineq}
let $N\ge 6$, $\ell>3/2+2s+\gamma$. Then,
\[
\frac{d}{dt}\cE_{N,\ell}+\cD_{N,\ell}\lesssim \cE_{N,\ell}^{1/2}\cD_{N,\ell},
\]
holds as long as the solution $g$ exists.
\end{prop}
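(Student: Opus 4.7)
The plan is to split the estimate into a \emph{microscopic} energy estimate (from which only the dissipation of $g_2=(I-\pP)g$ emerges naturally) and a \emph{macroscopic} estimate that recovers control of $\nabla_x\cA$, and then combine the two via an equivalent modified energy.

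\textbf{Microscopic energy estimate.} I would apply $\pa^\beta_{x,v}$ to \eqref{4-3-1.3}, weight by $W_\ell$, and pair in $L^2(\RR^6)$ with $W_\ell\,\pa^\beta g$, summing over $|\beta|\le N$. This is literally the computation carried out in Section~3 but now with $f=g$. Proposition~\ref{prop4.2} applied to the nonlinear term gives, after Cauchy--Schwarz,
\[
\Big|\big(W_\ell\pa^\beta\Gamma(g,g),\,W_\ell\pa^\beta g\big)_{L^2(\RR^6)}\Big|\lesssim \|g\|_{H^N_\ell}\,\|g\|_{\cB^N_\ell}\,\|g\|_{\cB^0_0}\lesssim \cE_{N,\ell}^{1/2}\cD_{N,\ell},
\]
once the $\cB^N_\ell$-norm of $g$ is bounded by $\|g_2\|_{\cB^N_\ell}^2$ plus a hydrodynamic piece absorbed into $\|\nabla_x\cA\|^2_{H^{N-1}}\le \cD_{N,\ell}$. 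The linear contributions are handled exactly as in Section~3 via Propositions~\ref{prop4.4} and~\ref{prop3.2.4}; the coercivity Proposition~\ref{prop2.1} lifted by Lemma~\ref{lemm3.1.1} yields $(\cL_1(W_\ell\pa^\beta g),W_\ell\pa^\beta g)\gtrsim \|W_\ell\pa^\beta g_2\|_{\cB^0_0}^2$ (modulo terms absorbed by Proposition~\ref{prop3.2.4} and the smallness of $g$). Summing, one obtains
\[
\frac{d}{dt}\cE_{N,\ell}+c_0\,\|g_2\|_{\cB^N_\ell}^2\lesssim \cE_{N,\ell}^{1/2}\cD_{N,\ell}.
\]

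\textbf{Macroscopic estimate.} To recover $\|\nabla_x\cA\|^2_{H^{N-1}}$ I would use the Guo-type moment approach. Writing $\pP g=(a+v\cdot b+|v|^2c)\sqrt\mu$ and taking the moments $1$, $v_i$, $|v|^2v_j$, $v_iv_j$ ($i\neq j$) of \eqref{4-3-1.3}, one derives a macroscopic balance system for $\cA=(a,b,c)$ whose sources are expressible linearly through $g_2$, $\cL g_2$, and the nonlinear remainder $\Gamma(g,g)$. Introducing auxiliary correctors via Poisson problems of the form $-\Delta\Phi_a=\pa_i a_i$, $-\Delta\Phi_{b,ij}=\pa_j b_i$, etc., and testing the equation against $\pa^\alpha_x\Phi\cdot\psi(v)\sqrt\mu$ for $|\alpha|\le N-1$ with appropriate Hermite-type velocity weights, standard manipulations yield
\begin{equation*}
\|\nabla_x\cA\|^2_{H^{N-1}(\RR^3)}\lesssim \|g_2\|_{\cB^N_\ell(\RR^6)}^2+\cE_{N,\ell}^{1/2}\cD_{N,\ell}+\frac{d}{dt}\cI_{N,\ell},
\end{equation*}
for an "interaction" functional $\cI_{N,\ell}$ with $|\cI_{N,\ell}|\lesssim\cE_{N,\ell}$. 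For the nonlinear source I would reuse Propositions~\ref{prop5.1}--\ref{prop5.2}, noting that the weight $\ell>3/2+2s+\gamma$ is exactly what allows the $L^2_{3/2+2s+\gamma+\epsilon}$-norm appearing in Proposition~\ref{prop5.2} to be absorbed into $\cE_{N,\ell}^{1/2}$.

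\textbf{Closing the estimate.} Form the modified energy $\tilde\cE_{N,\ell}=K\cE_{N,\ell}-\cI_{N,\ell}$ with $K$ large enough that $\tilde\cE_{N,\ell}\sim\cE_{N,\ell}$. Combining the two bounds, the $\|g_2\|^2_{\cB^N_\ell}$ piece from the microscopic estimate absorbs the corresponding piece on the right of the macroscopic estimate, producing the full dissipation $\cD_{N,\ell}$ on the left and leaving only the cubic term $\cE_{N,\ell}^{1/2}\cD_{N,\ell}$ on the right, which is the desired inequality.

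\textbf{Main obstacle.} The delicate point is the macroscopic estimate in the whole space. Unlike the torus case of Gressman--Strain, here the mean of $\cA$ cannot be controlled by Poincar\'e, so only $\nabla_x\cA$ appears in the dissipation, and the Poisson auxiliary functions must be set up so that all time-derivative boundary terms assemble into a single functional $\cI_{N,\ell}$ dominated by $\cE_{N,\ell}$. A second delicate point is ensuring that every nonlinear contribution at every derivative level factors as $\cE_{N,\ell}^{1/2}\cD_{N,\ell}$ rather than $\cE_{N,\ell}\cE_{N,\ell}^{1/2}$ or $\cE_{N,\ell}\cD_{N,\ell}^{1/2}$; this is where Proposition~\ref{prop5.2}, together with the choice $\ell>3/2+2s+\gamma$, plays a decisive role, since it trades a velocity-weighted $L^2$ norm (which belongs to $\cE$) against the coercive norm $|||\cdot|||_{\Phi_\gamma}$ (which belongs to $\cD$).
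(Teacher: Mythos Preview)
Your overall architecture---a weighted microscopic estimate producing $\|g_2\|_{\cB^N_\ell}^2$, a macroscopic estimate recovering $\|\nabla_x\cA\|_{H^{N-1}}^2$ at the expense of an interaction functional, and a linear combination---matches the paper. The macroscopic part you sketch via auxiliary Poisson problems is a legitimate alternative to the paper's direct manipulation of the moment system \eqref{macroeq}; either route yields the same time-derivative corrector.

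There is, however, a genuine gap in your treatment of the nonlinear term. You apply Proposition~\ref{prop4.2} to $\Gamma(g,g)$ and then claim that the resulting factor $\|g\|_{\cB^N_\ell}^2$ is bounded by $\|g_2\|_{\cB^N_\ell}^2$ plus ``a hydrodynamic piece absorbed into $\|\nabla_x\cA\|_{H^{N-1}}^2$''. This is false: by Lemma~\ref{lemm3.1.1} the hydrodynamic contribution to $\|g\|_{\cB^N_\ell}^2$ is $\|\cA\|_{H^N(\RR^3_x)}^2$, which contains the zero-derivative level $\|\cA\|_{L^2_x}^2$. That piece is \emph{not} in $\cD_{N,\ell}$ (the dissipation only sees $\nabla_x\cA$, precisely because there is no Poincar\'e inequality in the whole space---a point you yourself flag as an obstacle). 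Consequently your nonlinear bound is really
\[
\cE_{N,\ell}^{1/2}\,\cD_{N,\ell}\;+\;\cE_{N,\ell}^{1/2}\,\|\cA\|_{L^2_x}^2,
\]
and the second term is of type $\cE_{N,\ell}^{3/2}$, which cannot be absorbed and destroys the closure.

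The paper avoids this by never applying Proposition~\ref{prop4.2} to the full $\Gamma(g,g)$. Instead it splits $\Gamma(g,g)=\sum_{i,j}\Gamma(g_i,g_j)$ and treats each piece separately (the terms $M_{1ij}$). For the mixed and purely microscopic pieces a $g_2$-factor always supplies a $\cD$-contribution. For the purely macroscopic piece $\Gamma(g_1,g_1)$ one has a product $(\partial^{\alpha_1}\cA)(\partial^{\alpha_2}\cA)$ in $x$, and the crucial ingredient is Lemma~\ref{abc2}, which guarantees
\[
\|(\partial^{\alpha_1}\cA)(\partial^{\alpha_2}\cA)\|_{L^2_x}\le \|\nabla_x\cA\|_{H^{N-1}_x}\,\|\cA\|_{H^{N-1}_x},
\]
thereby extracting one gradient and placing the bound squarely inside $\cE_{N,\ell}^{1/2}\cD_{N,\ell}$. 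This bilinear product lemma is the missing idea in your proposal; Proposition~\ref{prop5.2} and the weight condition $\ell>3/2+2s+\gamma$ are used elsewhere (in Proposition~\ref{prop4.2}) but do not by themselves cure the zero-frequency macroscopic leak.
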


With this proposition, the standard continuity argument and
the local existence assure the
global existence of solution when the intial data
$g_0$ satisfies that $\cE_{N,\ell}(0)$ is sufficiently small.
And the above energy estimate will be obtained by using the
coercivity, upper bound and commutator estimates through the macro-micro
decomposition introduced in \cite{guo-1} as follows.

\subsection{ Macroscopic energy estimate.}
As in  \cite{guo-1},  the macroscopic component $\cA=(a,b,c)
$ satisfies
\begin{equation}\label{macroeq}
\left\{
\begin{array}{lrlrl}
& v_i | v|^2 \mu^{1/2} :&\quad&\nabla_x c &= -\partial_t r_c+l_c + h_c,
\\
 &v^2_i \mu^{1/2}:&\quad&\partial_t c +\partial_ib_i &= -\partial_t r_i+l_i + h_i ,
\\
& v_iv_j \mu^{1/2}:& &\partial_ib_j + \partial_j b_i &
= -\partial_t r_{ij}+l_{ij} + h_{ij} , \quad i\neq j,
\\
& v_i \mu^{1/2} :&&\partial_t b_i + \partial_i a &
= -\partial_t r_{bi}+l_{bi} + h_{bi},
\\
& \mu^{1/2} :&& \partial_t a &= -\partial_t r_a+l_a + h_a,
\end{array}
\right.
\end{equation}
where
\begin{align*}
&r=(g_2,e)_{L^2_v},\qquad l=-(v\cdot\nabla_x g_2+\mathcal{L} g_2,e)_{L^2_v},
\qquad h=(\Gamma(g,g),e)_{L^2(\RR^3_v)},
\end{align*}
stand for $r_c, \cdots, h_a$, while
\begin{align*}
 e\in \text{span}
\lbrace v_i | v|^2 \mu^{1/2} , v^2_i \mu^{1/2} ,
v_iv_j \mu^{1/2}, v_i \mu^{1/2} , \mu^{1/2} \rbrace.
\end{align*}

Same as Lemma 7.2 in \cite{amuxy3}, we have the following property on $\cA$.
\begin{lemm}\label{abc2}
Let $\p^\alpha=\p^\alpha_x$, $\alpha=\alpha_1+\alpha_2\in\NN^3, |\alpha|\le N, N\ge 3$.  Then,
\begin{equation*}
\|(\p^{\alpha_1} \cA)(\p^{\alpha_2} \cA)\|_{L^2_x}\le \|\nabla_x \cA
\|_{H^{N-1}_{x}}\|\cA\|_{H^{N-1}_{x}}.
\end{equation*}
\end{lemm}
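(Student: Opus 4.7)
The plan is a product estimate driven by H\"older's inequality combined with the Sobolev embedding $H^2(\RR^3_x)\hookrightarrow L^\infty(\RR^3_x)$, which is available because $\cA=\cA(t,x)$ depends only on $x\in\RR^3$. By the symmetry of the bilinear expression under $\alpha_1\leftrightarrow \alpha_2$, I may assume without loss of generality $|\alpha_1|\le|\alpha_2|$; combined with $|\alpha_1|+|\alpha_2|=|\alpha|\le N$ this gives $|\alpha_1|\le\lfloor N/2\rfloor\le N-2$ for $N\ge 3$, so that the lower-order factor always has at least two degrees of regularity to spare. The starting inequality is
\[
\|(\p^{\alpha_1}\cA)(\p^{\alpha_2}\cA)\|_{L^2_x}\le \|\p^{\alpha_1}\cA\|_{L^\infty_x}\,\|\p^{\alpha_2}\cA\|_{L^2_x},
\]
and the task is to distribute the factor $\nabla_x\cA$ and the factor $\cA$ (in their respective $H^{N-1}_x$ norms) between the two terms.

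When $|\alpha_1|\ge 1$, I would pull a derivative out of $\p^{\alpha_1}$ and apply Sobolev,
\[
\|\p^{\alpha_1}\cA\|_{L^\infty_x}\lesssim \|\p^{\alpha_1}\cA\|_{H^2_x}\lesssim \|\nabla_x\cA\|_{H^{|\alpha_1|+1}_x}\le \|\nabla_x\cA\|_{H^{N-1}_x},
\]
using $|\alpha_1|+1\le N-1$; for the second factor $|\alpha_2|\le N-1$ gives $\|\p^{\alpha_2}\cA\|_{L^2_x}\le \|\cA\|_{H^{N-1}_x}$. When instead $|\alpha_1|=0$ (so $|\alpha_2|=|\alpha|\ge 1$, which is the non-trivial range), I would use Sobolev directly on $\cA$, $\|\cA\|_{L^\infty_x}\lesssim \|\cA\|_{H^2_x}\le \|\cA\|_{H^{N-1}_x}$, and absorb the $\nabla_x$ into the higher-order factor, $\|\p^{\alpha_2}\cA\|_{L^2_x}\le \|\nabla_x\cA\|_{H^{|\alpha_2|-1}_x}\le \|\nabla_x\cA\|_{H^{N-1}_x}$. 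Multiplying the two estimates in each branch yields the claimed bound.

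The only point of care is the derivative bookkeeping: the hypothesis $N\ge 3$ is used precisely to ensure $\lfloor N/2\rfloor\le N-2$, so that the Sobolev index $|\alpha_1|+1$ arising from the $L^\infty$-factor never exceeds $N-1$. This becomes tight at $N=3$ with $|\alpha_1|=1$, but remains valid. Since the argument reduces to one application of $L^\infty$--$L^2$ H\"older, the three-dimensional Sobolev embedding $H^2\hookrightarrow L^\infty$, and an index count, there is no real analytic obstacle; it is the routine product estimate used in the macro--micro decomposition framework of \cite{guo-1}, and indeed this statement is a direct analogue of Lemma 7.2 of \cite{amuxy3}.
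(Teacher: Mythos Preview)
Your argument is correct and is precisely the standard H\"older--Sobolev product estimate that the paper has in mind; indeed the paper gives no proof and simply refers to Lemma~7.2 of \cite{amuxy3}, which is proved in exactly this way.

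One small point: you dismiss $\alpha=0$ as ``the trivial range,'' but the stated inequality for $\alpha_1=\alpha_2=0$ reads $\|\cA^2\|_{L^2_x}\lesssim \|\nabla_x\cA\|_{H^{N-1}_x}\|\cA\|_{H^{N-1}_x}$, and your $L^\infty$--$L^2$ splitting would give only $\|\cA\|_{L^\infty_x}\|\cA\|_{L^2_x}\lesssim\|\cA\|_{H^{N-1}_x}^2$, with no gradient factor. This endpoint is genuinely used (e.g.\ in the macroscopic estimate of $h$ at order zero), so it should be closed: one line of Gagliardo--Nirenberg does it, namely
\[
\|\cA^2\|_{L^2_x}=\|\cA\|_{L^4_x}^2\lesssim \|\cA\|_{L^2_x}^{1/2}\|\nabla_x\cA\|_{L^2_x}^{3/2}\le \|\cA\|_{H^{N-1}_x}\|\nabla_x\cA\|_{H^{N-1}_x},
\]
using $\|\nabla_x\cA\|_{L^2_x}\le\|\cA\|_{H^{N-1}_x}$ for $N\ge 2$. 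With this addendum the proof is complete.
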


The following lemma is similar to Lemma 7.3 in \cite{amuxy3} with
some modification regarding to  the hard potential assumption. Here, we include its
proof for the completeness.

\begin{lemm} \label{rlh}
Let $\p^\alpha=\p^\alpha_x  , \p_i=\p_{x_i}$, $|\alpha|\le N-1, N\ge 3$. Then, one has
\begin{align}\label{Dr}
&\|\p_i\p^\alpha r\,\|_{L^2_x}+\|\p^\alpha   l\,\|_{L^2_{x}}
\lesssim
  \|g_2\|_{H^N(\RR^3_x; L^2(\RR^3_v))}\equiv A_1,
  \\&\label{Dh}
   \|\p^\alpha   h\|_{L^2_x}
 \lesssim \|\nabla_x \cA\|_{H^{N-2}_{x}}\|\cA\|_{H^{N-1}_{x}}
+\|\cA \|_{H^N_x}\|g_2\|_{\cB^N_0(\RR^6)}
\\&\hspace{1cm}+\notag
\|g_2\|_{H^N_{s+\gamma/2}(\RR^6)}
\| g_2\|_{\cB^N_0(\RR^6)}
\equiv A_2.
\end{align}
\end{lemm}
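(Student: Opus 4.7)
The plan is to prove (\ref{Dr}) and (\ref{Dh}) as two independent arguments. For (\ref{Dr}), I would exploit that $e(v)$ is a product of a polynomial and $\mu^{1/2}$, hence Schwartz in $v$ and in particular in every weighted $L^2_v$. Applying Cauchy--Schwarz in $v$ gives $|(\partial_i\partial^\alpha g_2,e)_{L^2_v}|(x)\lesssim\|\partial_i\partial^\alpha g_2(x,\cdot)\|_{L^2_v}$, which after $L^2_x$-integration immediately yields the bound on $\|\partial_i\partial^\alpha r\|_{L^2_x}$ by $A_1$. For $\|\partial^\alpha l\|_{L^2_x}$ I would split into the transport part $(v\cdot\nabla_x g_2,e)_{L^2_v}$, handled identically with the still-Schwartz test function $ve$ applied to $\nabla_x g_2$, and the linear part $(\mathcal L g_2,e)_{L^2_v}$, which I would rewrite via self-adjointness as $(g_2,\mathcal L e)_{L^2_v}$. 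Since $e\in\cS(\RR^3_v)$, $\mathcal L e\in L^2_v$, so the same Cauchy--Schwarz estimate closes this term as well.

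The substantive step is (\ref{Dh}). Because $\partial_x$ commutes with the velocity-integral operator $\Gamma$, the plan is to apply Leibniz to get
\[
\partial^\alpha h=\sum_{\alpha_1+\alpha_2=\alpha}C_{\alpha_1,\alpha_2}\bigl(\Gamma(\partial^{\alpha_1}g,\partial^{\alpha_2}g),\,e\bigr)_{L^2_v},
\]
then split $g=g_1+g_2=\pP g+(I-\pP)g$ in each slot, producing four families: macro--macro, macro--micro, micro--macro and micro--micro. I would treat each of them with Proposition \ref{prop5.1} applied pointwise in $x$ with test function $e$; since $|||e|||_{\Phi_\gamma}\lesssim 1$ by the Maxwellian character of $e$, the proposition gives
\[
|(\Gamma(f,g),e)_{L^2_v}|(x)\lesssim\|f\|_{L^2_{s+\gamma/2}(\RR^3_v)}\,|||g|||_{\Phi_\gamma}+\|g\|_{L^2_{s+\gamma/2}(\RR^3_v)}\,|||f|||_{\Phi_\gamma},
\]
which I would then integrate in $x$ and estimate by placing one factor in $L^\infty_x$ via Sobolev embedding.

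For the macro--macro family, both $\|g_1(x,\cdot)\|_{L^2_{s+\gamma/2}}$ and $|||g_1(x,\cdot)|||_{\Phi_\gamma}$ are pointwise dominated by $|\cA(x)|$, so the contribution reduces to $\|\partial^{\alpha_1}\cA\,\partial^{\alpha_2}\cA\|_{L^2_x}$, and Lemma \ref{abc2} delivers the first piece of $A_2$. For the two mixed families I would distribute derivatives so that the factor carrying fewer derivatives lands in $L^\infty_x$ via $H^2(\RR^3_x)\hookrightarrow L^\infty(\RR^3_x)$, bounding its $|||\cdot|||_{\Phi_\gamma}$ by Proposition \ref{partI-prop2.2}; since $|\alpha|\le N-1$ and $N\ge 3$, in every summand at least one of $|\alpha_1|,|\alpha_2|$ is $\le N-2$, so this splitting is always available and produces the second piece $\|\cA\|_{H^N_x}\|g_2\|_{\cB^N_0(\RR^6)}$. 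The micro--micro family is treated identically and yields $\|g_2\|_{H^N_{s+\gamma/2}(\RR^6)}\|g_2\|_{\cB^N_0(\RR^6)}$.

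The main obstacle I expect is this combinatorial bookkeeping: verifying in every summand that the derivative split allows one factor to sit in $L^\infty_x$ via Sobolev while the other stays within the precise norms appearing on the right of $A_2$, and--crucially--that the weight index on $g_2$ in the conclusion remains $0$ rather than being inflated to $\ell$. The latter is possible precisely because $e(v)$ is Schwartz, so no velocity weight is generated from the test slot and only the intrinsic $s+\gamma/2$ coming from Proposition \ref{prop5.1} propagates. This weight-neutrality is exactly what will make (\ref{Dh}) compatible with the fixed-$\ell$ energy framework used for the global estimate in Proposition \ref{energy}.
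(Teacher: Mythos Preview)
Your proposal is correct and follows essentially the same approach as the paper. For \eqref{Dr} the paper does exactly your Cauchy--Schwarz argument (writing $(\p^\alpha g_2,\cL^* e)_{L^2_v}$ where you write $(\p^\alpha g_2,\cL e)_{L^2_v}$, which coincide by self-adjointness), and for \eqref{Dh} it likewise applies Proposition~\ref{prop5.1} pointwise against $e$, splits $g=g_1+g_2$, handles the macro--macro piece via Lemma~\ref{abc2}, and treats the remaining pieces by the same Sobolev-in-$x$ case split (the paper pivots at $|\alpha_1|\le 1$ versus $|\alpha_1|\ge 2$, which is a specific instance of your ``one index is $\le N-2$'' observation).
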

\noindent
\begin{proof}
By the Cauchy-Schwarz inequality,
\begin{align*}
\|\p_i\p^\alpha r\,\|_{L^2(\RR^3_x)}
&=
\| (\p^\alpha \nabla_x g_2, e)_{L^2(\RR^3_v)}\|_{L^2(\RR^3_x)}
\le
  \|\p^\alpha  \nabla_x g_2\|_{L^2(\RR^6)}
\le
\|g_2\|_{H^N(\RR^3_x,\, L^2(\RR^3_v))} ,
\end{align*}
and
 \begin{align*}
\|\p^\alpha   l\,\|_{L^2_{x}}&\le
\| (\nabla_x\p^\alpha g_2, ve)_{L^2(\RR^3_v)}\|_{L^2(\RR^3_x)}+
\|(\p^\alpha g_2, \mathcal{L}^*e)_{L^2(\RR^3_v)}\|_{L^2(\RR^3_x)}
\\&\le \|\p^\alpha  \nabla_x g_2\|_{L^2(\RR^6)}+\|\p^\alpha   g_2\|_{L^2(\RR^6)}
\le \|g_2\|_{H^N(\RR^3_x,\, L^2(\RR^3_v))}.
 \end{align*}
Then \eqref{Dr} follows because $H^N_{\gamma/2+s}(\RR^6)\subset \cB^N_0(\RR^6)$
holds by virtue of Proposition  \ref{partI-prop2.2}.
We shall prove \eqref{Dh} as follows. By  Proposition \ref{prop5.1},
\begin{align*}
 |\p_x^\alpha h|&\le\sum_{\alpha_1+\alpha_2=\alpha}
|(\Gamma(\p_x^{\alpha_1}g,\p_x^{\alpha_2}g),e)_{L^2_v} |
\lesssim
 \sum_{\alpha_1+\alpha_2=\alpha}
 ||\p_x^{\alpha_1} g||_{L^2_{s+\gamma/2}}||| \ \p_x^{\alpha_2}g|||\ |||e|||
\\&\lesssim
 \sum_{\alpha_1+\alpha_2=\alpha}(|\p_x^{\alpha_1}\cA|
+||\p_x^{\alpha_1} g_2||_{L^2_{s+\gamma/2}})
 (|\p_x^{\alpha_2}\cA|
+|||\p_x^{\alpha_2} g_2|||).
\end{align*}
Hence
\begin{align*}
\|\p_x^\alpha h\|_{L^2_x}&\lesssim
 \|\ |\p_x^{\alpha_1}\cA|^2\ |\p_x^{\alpha_1}\cA|\  \|_{L^2_x}+H,
\end{align*}
where
\begin{align*}
H&=\|\ |\p_x^{\alpha_1}\cA |\ |||\p_x^{\alpha_2} g_2|||\  \|_{L^2_x}
+\|\
 ||\p_x^{\alpha_1} g_2||_{L^2_{s+\gamma/2}}|\p^{\alpha_2}\cA|\  \|_{L^2_x}
\\&+\|\
 ||\p_x^{\alpha_1} g_2||_{L^2_{s+\gamma/2}}
|||\p_x^{\alpha_2} g_2|||\  \|_{L^2_x}.
\end{align*}
The first term on the right hand of the above inequality
can be  evaluated by using Lemma \ref{abc2}.
As for $H$, when $|\alpha_1|=0,1$, by using Proposition \ref{partI-prop2.2}, we have
\begin{align*}
H&\lesssim \|\p_x^{\alpha_1}\cA \|_{H^2}\|g_2\|_{\cB^N_0(\RR^6)}
+\|\p_x^{\alpha_1}g_2\|_{H^2_{s+\gamma/2}(\RR^6)}
\|\cA \|_{H^N_x}+\|\p_x^{\alpha_1}g_2\|_{H^2_{s+\gamma/2}(\RR^6)}
\| g_2\|_{\cB^N_0(\RR^6)}
\\&
\lesssim
\|\cA \|_{H^N_x}\|g_2\|_{\cB^N_0(\RR^6)}
+
\|g_2\|_{H^N_{s+\gamma/2}(\RR^6)}
\| g_2\|_{\cB^N_0(\RR^6)}
.
\end{align*}
Similarly, when $2\le |\alpha_1|\le N$, we have
\begin{align*}
H&\lesssim \|\p_x^{\alpha_1}\cA \|_{L^2}\|\p_x^{\alpha_2}g_2\|_{H^2(\RR^3;\cB^0_0(\RR^3)}
+\|\p_x^{\alpha_1}g_2\|_{H^0_{s+\gamma/2}(\RR^6)}
(\|\cA \|_{H^N_x}+
\| g_2\|_{\cB^N_0(\RR^6)})
\\&
\lesssim
\|\cA \|_{H^N_x}\|g_2\|_{\cB^N_0(\RR^6)}
+
\|g_2\|_{H^N_{s+\gamma/2}(\RR^6)}
\| g_2\|_{\cB^N_0(\RR^6)}.
\end{align*}
Thus, the proof of the lemma is completed.
\end{proof}

The following lemma about the energy estimate on the macroscopic component
which is also similar to the corresponding one in \cite{amuxy3} for
the Maxwellian molecule.

\begin{lemm}\label{pcA}
For $|\alpha|\le N-1$, we have
\begin{align}\label{pabc}
\|\nabla_x \p^\alpha \cA\|_{L^2(\RR^3_x)}^2
&\lesssim
-\frac{d}{dt}\Big\{(\p^\alpha r,\nabla_x \p^\alpha (a, - b, c))_{L^2(\RR^3_x)}
+(\p^\alpha b, \nabla_x \p^\alpha a)_{L^2(\RR^3_x)}\Big\}
\\& +\notag
\|g_2\|_{H^N(\RR^3_x;L^2(\RR^3_v))}^2+E_{N,1}D_{N,0},
\end{align}
where
\begin{align*}
&E_{N,\ell}=\|\cA\|_{H^{N}(\RR^3)}^2+\|g_2\|_{H^N(\RR^3_x;L^2_l(\RR^3_v))}^2,
\quad
D_{N,\ell}=\|\nabla_x\cA\|_{H^{N-1}(\RR^3)}^2+\sum_{|\alpha|\le N}\|\partial^\alpha_x g_2\|_{\cB^0_\ell(\RR^6)}.
\end{align*}
\end{lemm}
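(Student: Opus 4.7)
The strategy is the standard macroscopic energy estimate \`a la Guo: recover spatial derivatives of $a$, $b$, $c$ from the macroscopic system \eqref{macroeq} one at a time, and convert the unavoidable $\partial_t r$ contributions into a total time derivative by integrating by parts in $t$. First I would apply $\p^\alpha_x$ (for $|\alpha|\le N-1$) to each of the five identities in \eqref{macroeq} and pair the result against the appropriate spatial derivative of $\cA$: the first equation against $\nabla_x\p^\alpha c$, the $v_i^2$ and $v_iv_j$ equations against $\nabla_x\p^\alpha b$ after a symmetric/antisymmetric split in $i,j$, and the $v_i$-equation against $\nabla_x\p^\alpha a$ (the $\mu^{1/2}$-equation will serve to re-express $\partial_t\p^\alpha a$). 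This places $\|\nabla_x\p^\alpha c\|^2_{L^2_x}$, $\|\nabla_x\p^\alpha b\|^2_{L^2_x}$, and $\|\nabla_x\p^\alpha a\|^2_{L^2_x}$ on the respective left-hand sides.

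The key manipulation is then to rewrite each term $-(\nabla_x\p^\alpha\cA,\partial_t\p^\alpha r)_{L^2_x}$ via integration by parts in $t$ as $-\frac{d}{dt}(\p^\alpha r,\nabla_x\p^\alpha\cA)_{L^2_x}+(\p^\alpha r,\partial_t\nabla_x\p^\alpha\cA)_{L^2_x}$. Once the three sub-estimates are summed with the signs dictated by the macroscopic equations, the first piece aggregates into the claimed $-\frac{d}{dt}(\p^\alpha r,\nabla_x\p^\alpha(a,-b,c))_{L^2_x}$, where the $-b$ reflects the fact that the $v_iv_j$ equations feed $r_{ij}$ with the opposite orientation from $r_c$ and $r_a$. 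Any remaining $\partial_t\cA$ is substituted, using the remaining equations of \eqref{macroeq}, by one extra spatial derivative of $\cA$ and a further $r,l,h$ remainder; in particular the relation $\partial_tb_i+\partial_ia=\ldots$ tested against $\nabla_x\p^\alpha a$ produces, after a further integration by parts in $t$, the extra boundary term $-\frac{d}{dt}(\p^\alpha b,\nabla_x\p^\alpha a)_{L^2_x}$ that appears in the statement.

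I would then close the inequality by Cauchy--Schwarz, absorbing half of $\|\nabla_x\p^\alpha\cA\|^2_{L^2_x}$ into the left-hand side, and by invoking Lemma \ref{rlh} on the surviving $\p^\alpha l$ and $\p^\alpha h$ terms. The $\p^\alpha l$ contribution gives $A_1^2\lesssim \|g_2\|_{H^N(\RR^3_x;L^2(\RR^3_v))}^2$, yielding the second term of the claim. The $\p^\alpha h$ contribution gives $A_2^2$, whose three summands are controlled by $E_{N,1}D_{N,0}$: the $\cA$-cubic via Lemma \ref{abc2} and the definitions of $E_{N,1}$, $D_{N,0}$, the mixed $\|\cA\|_{H^N}\|g_2\|_{\cB^N_0}$-term directly, and the $\|g_2\|_{H^N_{s+\gamma/2}}\|g_2\|_{\cB^N_0}$-term after noting that $\|g_2\|_{\cB^N_0}^2$ is part of $D_{N,0}$ while $\|g_2\|_{H^N_{s+\gamma/2}}$ is controlled by $E_{N,1}^{1/2}$ (via Proposition \ref{partI-prop2.2} to trade the weight $s+\gamma/2$ for a triple-norm factor when $s+\gamma/2>1$).

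The main obstacle I anticipate is the bookkeeping of signs and the careful placement of every $\partial_t$: one must verify that each time derivative either enters the bracketed quantity forming the total $-\frac{d}{dt}\{\cdot\}$ exactly as stated, or is successfully eliminated through \eqref{macroeq} without leaving behind an uncontrolled residue. A secondary technicality is to check that the $\cB^N_0(\RR^6)$-factor produced by Lemma \ref{rlh} is genuinely dominated by the purely $x$-derivative dissipation $\sum_{|\alpha|\le N}\|\p^\alpha_x g_2\|_{\cB^0_0}^2$ that defines $D_{N,0}$, which should hold because only spatial differentiations act on $g_2$ throughout the macroscopic estimate.
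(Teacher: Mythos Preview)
Your overall strategy matches the paper's proof: pair each line of \eqref{macroeq} with the corresponding gradient of $\cA$, integrate the $\partial_t r$ terms by parts in $t$ to produce the bracketed $-\frac{d}{dt}\{\cdot\}$, and close with Cauchy--Schwarz and Lemma~\ref{rlh}. The extra boundary term $(\p^\alpha b,\nabla_x\p^\alpha a)$ indeed arises, as you say, from the $\partial_t b$ piece of equation (iv) tested against $\nabla_x\p^\alpha a$.

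There is one genuine gap. After integrating by parts in $t$, each of the three sub-estimates leaves a residual factor $\partial_t\p^\alpha a$, $\partial_t\p^\alpha b$, or $\partial_t\p^\alpha c$. For $b$ and $c$ your idea of substituting via \eqref{macroeq} works (equations (iv) and (ii) express $\partial_t b$ and $\partial_t c$ as a spatial derivative of $\cA$ plus $r,l,h$ remainders). But for $\partial_t a$ your plan fails: equation (v) reads $\partial_t a=-\partial_t r_a+l_a+h_a$, so substituting it reintroduces a time derivative $\partial_t r_a$, and a further integration by parts would generate an additional boundary term $(\nabla_x\p^\alpha(b+r),\p^\alpha r_a)$ that is \emph{not} in the statement, while the leftover again contains $\partial_t b$ and $\partial_t r$. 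The paper avoids this loop by a separate direct estimate: since $\pP\cL=0$ and $\pP\Gamma=0$, applying $\pP$ to \eqref{4-3-1.3} gives $\partial_t\pP g=-\pP(v\cdot\nabla_x g)$, whence
\[
\|\partial_t\p^\alpha\cA\|_{L^2_x}\lesssim\|\nabla_x\p^\alpha\cA\|_{L^2_x}+\|\nabla_x\p^\alpha g_2\|_{L^2(\RR^6)}.
\]
This single line closes all three residual $\partial_t\cA$ terms at once, with the $\|\nabla_x\p^\alpha\cA\|$ contribution absorbed by taking the Cauchy--Schwarz parameter small.

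A smaller point: your parenthetical about invoking Proposition~\ref{partI-prop2.2} ``when $s+\gamma/2>1$'' does not yield control by $E_{N,1}^{1/2}$; that proposition places $\|g_2\|_{L^2_{s+\gamma/2}}$ in the \emph{dissipation}, not the energy. The inequality $\|g_2\|_{H^N_{s+\gamma/2}}\le E_{N,1}^{1/2}$ as literally stated requires $s+\gamma/2\le 1$; in the sequel the lemma is only fed into $\cE_{N,\ell}$ with $\ell>3/2+2s+\gamma$, which absorbs the discrepancy, so this is a notational rather than substantive issue.
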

\begin{proof}
(a) Estimate on $\nabla_x\p^\alpha  a$. Let $A_1,A_2$ be those defined
 in Lemma \ref{rlh}.
{}From \eqref{macroeq} (iv),
\begin{align*}
 \|\nabla_x \p^\alpha a\|_{L^2(\RR^3_x)}^2
&=(\nabla_x \p^\alpha a,\nabla_x \p^\alpha a)_{L^2(\RR^3_x)}
\\&= (\p^\alpha (-\p_t b-\p_tr+l+h),\nabla_x \p^\alpha a)_{L^2(\RR^3_x)}
\\&\le R_1+C_\eta (A_1^2+A_2^2)+\eta \|\nabla_x \p^\alpha a\|_{L^2(\RR^3_x)}^2.
\end{align*}
Here,
\begin{align*}
R_1&=-(\p^\alpha \p_tb+\p^\alpha \p_tr,\nabla_x \p^\alpha a)_{L^2(\RR^3_x)}
\\&=-\frac{d}{dt}(\p^\alpha (b+r),\nabla_x \p^\alpha a)_{L^2(\RR^3_x)}+
(\nabla_x\p^\alpha (b+r), \p_t \p^\alpha a)_{L^2(\RR^3_x)}
\\&
\le
-\frac{d}{dt}(\p^\alpha (b+r),\nabla_x \p^\alpha a)_{L^2(\RR^3_x)}+
C_\eta ( \|\nabla_x\p^\alpha  b\|_{L^2(\RR^3_x)}^2+A_1^2)
+\eta
\|\p_t\p^\alpha  a\|^2_{L^2(\RR^3_x)}.
\end{align*}
(b) Estimate on $\nabla_x\p^\alpha  b$. {}From \eqref{macroeq} (iii) and (ii),
\begin{align*}
\Delta_x&\p^\alpha b_i+\p^2_{i}\p^\alpha   b_i=
{\sum_{j \ne i} \p_j \p^{\alpha}( \p_j b_i + \p_i b_j)
+ \p_i \p^\alpha( 2 \p_i b_i - \sum_{j\ne i} \p_i b_j)
}
\\
&\hskip2cm =
\p_i\p^\alpha  (-\p_t r +l+h),
\\
&\|\nabla_x\p^\alpha  b\|_{L^2(\RR^3_x)}^2+\|\p_i\p^\alpha  b\|_{L^2(\RR^3_x)}^2=
-(\Delta_x\p^\alpha b_i+\p^2_{i}\p^\alpha   b_i, \p^\alpha  b)_{L^2(\RR^3_x)}
= R_2+R_3+R_4,
\end{align*}
where
\begin{align*}
R_2&=(\p_t\p^\alpha   r,\p_i\p^\alpha  b)_{L^2(\RR^3_x)}
=\frac{d}{dt}(\p^\alpha   r,\p_i\p^\alpha  b)_{L^2(\RR^3_x)}+
(\p_i\p^\alpha   r,\p_t\p^\alpha  b)_{L^2(\RR^3_x)}
\\&\hspace*{1cm}\le
\frac{d}{dt}(\p^\alpha   r,\p_i\p^\alpha  b)_{L^2(\RR^3_x)}+
C_\eta A_1^2
+\eta
\|\p_t\p^\alpha  b\|^2_{L^2(\RR^3_x)},
\\
R_3&=-(\p^\alpha   l,\p_i\p^\alpha  b)_{L^2(\RR^3_x)}\le
C_\eta A_1^2
+
\eta
\|\p_i\p^\alpha  b\|^2_{L^2(\RR^3_x)},
\\R_4&=-(\p^\alpha   h,\p_i\p^\alpha  b)_{L^2(\RR^3_x)}\le
C_\eta A_2^{{2}}
+
\eta
\|\p_i\p^\alpha  b\|^2_{L^2(\RR^3_x)}.
\end{align*}
(c) Estimate on $\nabla_x\p^\alpha  c$. {}From \eqref{macroeq} (i),
 \begin{align*}
 \|\nabla_x \p^\alpha &c\|_{L^2(\RR^3_x)}^2
=(\nabla_x \p^\alpha c,\nabla_x \p^\alpha c)_{L^2(\RR^3_x)}
= (\p^\alpha (-\p_tr+l+h),\nabla_x \p^\alpha c)_{L^2(\RR^3_x)}
\\&
\le R_5 
+C_\eta(A_1^2+A_2^2)+\eta \|\nabla_x \p^\alpha c\|_{L^2(\RR^3_x)}^2,
\end{align*}
where
\begin{align*}
R_5&=-(\p^\alpha \p_tr,\nabla_x \p^\alpha c)_{L^2(\RR^3_x)}=
-\frac{d}{dt}(\p^\alpha r,\nabla_x \p^\alpha c)_{L^2(\RR^3_x)}+
(\nabla_x\p^\alpha r, \p_t \p^\alpha c)_{L^2(\RR^3_x)}
\\&\quad
\le
-\frac{d}{dt}(\p^\alpha r,\nabla_x \p^\alpha c)_{L^2(\RR^3_x)}+
C_\eta A_1^2 
+\eta
\|\p_t\p^\alpha  c\|^2_{L^2(\RR^3_x)}.
\end{align*}
(d) Estimate on $\p_t\p^\alpha  \cA$. We directly have
\begin{align*}
\|\p_t\p^\alpha & \cA\|_{L^2(\RR^3_x)}=\|\p^\alpha \p_t \pP g\|_{L^2(\RR^6_{x, v})}
\\& \notag
=\|\p^\alpha  \pP\Big(-v\cdot\nabla_xg-\mathcal{L}g+\Gamma(g,g)\Big)\|_{L^2(\RR^6_{x, v})}
\\&=\|\p^\alpha  \pP(v\cdot\nabla_xg)\|_{L^2(\RR^6_{x, v})}
\le \|\nabla_x\p^\alpha  \cA\|_{L^2(\RR^3_x)}+
\|\nabla_x\p^\alpha g_2\|_{L^2(\RR^6_{x, v})}.\notag
\end{align*}
Combining all the above estimates and
taking $\eta>0$ sufficiently small, we deduce
\begin{align*}
\|\nabla_x \p^\alpha \cA\|_{L^2(\RR^3_x)}^2
&\lesssim
-\frac{d}{dt}\Big\{(\p^\alpha r,\nabla_x \p^\alpha (a, -b,c))_{L^2(\RR^3_x)}
+(\p^\alpha b, \nabla_x \p^\alpha a)_{L^2(\RR^3_x)}\Big\}
\\&\quad +\notag
A_1^2+A_2^2+
 \|\nabla_x\p^\alpha g_2\|_{L^2(\RR^6_{x, v})}^2.
\end{align*}
Finally, by choosing $|\alpha|\le N-1$ and using Lemma \ref{rlh}, we obtain
\begin{align*}
A_1^2&+A_2^2+
 \|\nabla_x\p^\alpha g_2\|_{L^2(\RR^6_{x, v})}^2
\lesssim \|g_2\|_{H^N(\RR^3_x;L^2(\RR^3_v))}^2+E_{N,1}D_{N,0},
\end{align*}
which  completes the proof of the lemma.
\end{proof}


\subsection{ Microscopic energy. }

The energy estimate on the microscopic component will be given in
two parts, that is, one without weight and another one with weight
as follows.

\subsection*{Microscopic energy  estimate without weight}\label{section7.2}
In this subsection,
we shall prove the  following estimate with only
 $x$-derivatives of the solution.
\begin{lemm} \label{micro1}
Let $ N\ge 3$. Then,
\begin{equation}\label{microenergy11}
\frac{d}{dt}E_{N,0}+D_{N,0}
\lesssim
E_{N,{s+\gamma/2}}^{1/2}D_{N,0}.
\end{equation}
\end{lemm}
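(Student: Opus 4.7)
The plan is to derive a micro energy estimate by applying $x$-derivatives to \eqref{4-3-1.3} and pairing against $\partial_x^\alpha g$, then combine with the macroscopic estimate from Lemma~\ref{pcA} to recover the full $D_{N,0}$. For each $|\alpha|\le N$, the transport term in the energy identity vanishes by integration by parts. Since $\partial_x^\alpha$ commutes with the $v$-projection $\pP$ and $\cL$ annihilates $\cN$, Proposition~\ref{prop2.1} yields $(\cL\partial^\alpha g,\partial^\alpha g)_{L^2(\RR^6)}\gtrsim\|\partial^\alpha g_2\|^2_{\cB^0_0(\RR^6)}$; summing gives
\[
\tfrac{1}{2}\tfrac{d}{dt}\|g\|^2_{H^N(\RR^3_x;L^2(\RR^3_v))}
+c\sum_{|\alpha|\le N}\|\partial^\alpha g_2\|^2_{\cB^0_0(\RR^6)}
\le\sum_{|\alpha|\le N}(\partial^\alpha\Gamma(g,g),\partial^\alpha g)_{L^2(\RR^6)}.
\]

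For the nonlinear term, the collision invariants $\int\Gamma(g,g)\,q\mu^{1/2}\,dv=0$ for $q\in\{1,v_i,|v|^2\}$ imply $(\partial^\alpha\Gamma(g,g),\partial^\alpha g_1)_{L^2(\RR^6)}=0$, so the test function effectively reduces to $\partial^\alpha g_2$, which is controlled by $D_{N,0}^{1/2}$. Leibniz gives $\partial^\alpha\Gamma(g,g)=\sum_{\alpha_1+\alpha_2=\alpha}\binom{\alpha}{\alpha_1}\Gamma(\partial^{\alpha_1}g,\partial^{\alpha_2}g)$. Assume WLOG $|\alpha_1|\le|\alpha_2|$, so $|\alpha_1|\le N/2\le N-2$ for $N\ge 3$, and the Sobolev embedding $H^2(\RR^3_x)\hookrightarrow L^\infty(\RR^3_x)$ applies to $\partial^{\alpha_1}g$. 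Applying Proposition~\ref{prop5.1} pointwise in $x$ and then H\"older in $x$, routing the weighted norm to the low-derivative factor, one obtains
\[
\|\partial^{\alpha_1}g\|_{L^\infty_x L^2_{s+\gamma/2}}\lesssim \|g\|_{H^N(\RR^3_x;L^2_{s+\gamma/2}(\RR^3_v))}\le E_{N,s+\gamma/2}^{1/2}.
\]
After the macro-micro splitting $\partial^{\alpha_2}g=\partial^{\alpha_2}g_1+\partial^{\alpha_2}g_2$, the $g_2$ part contributes via $\cB^0_0$ and the $g_1$ part (for $|\alpha_2|\ge 1$) via $\|\partial^{\alpha_2}g_1\|_{L^2_x(|||\cdot|||)}\lesssim\|\partial^{\alpha_2}\cA\|_{L^2_x}\le\|\nabla_x\cA\|_{H^{N-1}}$, both bounded by $D_{N,0}^{1/2}$; multiplied by $\|\partial^\alpha g_2\|_{\cB^0_0}\le D_{N,0}^{1/2}$, this yields the $E_{N,s+\gamma/2}^{1/2}D_{N,0}$ bound.

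To recover the macroscopic dissipation $\|\nabla_x\cA\|^2_{H^{N-1}}$, add a small multiple of Lemma~\ref{pcA} summed over $|\alpha|\le N-1$. The cross-term time-derivatives introduced are bounded by $E_{N,0}$, hence absorbable into $\tfrac{d}{dt}E_{N,0}$ up to norm equivalence. The lower-order term $\|g_2\|^2_{H^N_x L^2_v}$ is absorbed by the micro-dissipation because the hard-potential weight $s+\gamma/2>0$ gives $\|g_2\|^2_{L^2_{s+\gamma/2}}\gtrsim\|g_2\|^2_{L^2}$ via Proposition~\ref{partI-prop2.2}, so $\sum\|\partial^\alpha g_2\|^2_{\cB^0_0}\gtrsim\|g_2\|^2_{H^N_x L^2_v}$. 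The nonlinear error $E_{N,1}D_{N,0}$ appearing in Lemma~\ref{pcA} is of the desired form after applying the same analysis to the quantities $r,l,h$ in Lemma~\ref{rlh}.

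The main obstacle is the nonlinear estimate in the degenerate splits where $|\alpha_2|=0$, in particular the $\alpha=0$ case with the inert $\Gamma(g_1,g_1)$ contribution: the $g_1$-factor has no dissipation control beyond $\|\cA\|_{L^2_x}\le E_{N,0}^{1/2}$, so the naive bound produces an $E\cdot D^{1/2}$-type term instead of $E^{1/2}D$. Matching the claimed bound requires exploiting the explicit structure of $\Gamma(g_1,g_1)$ as a polynomial in $\cA$ times a rapidly-decaying function of $v$, combined with Gagliardo--Nirenberg interpolation $\|\cA\|^2_{L^4_x}\lesssim\|\cA\|_{L^2_x}^{1/2}\|\nabla_x\cA\|_{L^2_x}^{3/2}$ and the bootstrap smallness $E_{N,\ell}\ll 1$, to absorb these spurious contributions into $E^{1/2}_{N,s+\gamma/2}D_{N,0}$.
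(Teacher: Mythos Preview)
Your overall strategy matches the paper's: apply $\partial_x^\alpha$, pair with $\partial^\alpha g$, use coercivity on the microscopic part, replace the test function by $\partial^\alpha g_2$ via the collision invariants, and estimate the nonlinear term through Proposition~\ref{prop5.1} after the macro--micro splitting $g=g_1+g_2$.

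However, the ``main obstacle'' you flag is not an obstacle, and invoking bootstrap smallness here is both unnecessary and inadmissible (the lemma is an unconditional differential inequality). The paper handles all of $J^{(11)}=(\partial^\alpha\Gamma(g_1,g_1),\partial^\alpha g_2)$, including $\alpha=0$, uniformly by Lemma~\ref{abc2}:
\[
\|(\partial^{\alpha_1}\cA)(\partial^{\alpha_2}\cA)\|_{L^2_x}
\le \|\nabla_x\cA\|_{H^{N-1}_x}\|\cA\|_{H^{N-1}_x},
\qquad \alpha_1+\alpha_2=\alpha,\ |\alpha|\le N.
\]
Your Gagliardo--Nirenberg inequality is precisely the $\alpha=0$ case of this lemma: from $a^{1/2}b^{3/2}=b(ab)^{1/2}\le b(a+b)$ one gets
$\|\cA\|_{L^4}^2\lesssim\|\cA\|_{L^2}^{1/2}\|\nabla_x\cA\|_{L^2}^{3/2}\le\|\nabla_x\cA\|_{L^2}\|\cA\|_{H^1}$,
hence $|J^{(11)}|\lesssim\|\nabla_x\cA\|_{H^{N-1}}\|\cA\|_{H^{N-1}}\|\partial^\alpha g_2\|_{\cB^0_0}\le E_{N,0}^{1/2}D_{N,0}$ directly, with no smallness needed. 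You had the right tool but stopped one elementary inequality short of the clean bound.

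Two smaller points. First, the paper does \emph{not} bring Lemma~\ref{pcA} into the proof of this lemma; it simply sums \eqref{microenergy1} over $|\alpha|\le N$, so only the microscopic part of $D_{N,0}$ appears on the left, and the coupling with the macroscopic dissipation is deferred to the final linear combination proving Proposition~\ref{energy}. Second, your ``WLOG $|\alpha_1|\le|\alpha_2|$'' skips over the fact that Proposition~\ref{prop5.1} produces two terms; in the paper's treatment of $J^{(22)}$ the second term $\|\,|||\partial^{\alpha_1}g_2|||\,\|_{L^\infty_x}\,\|\partial^{\alpha_2}g_2\|_{L^2_xL^2_{s+\gamma/2}}$ is handled by placing the triple-norm factor into $D_{N,0}^{1/2}$ via Sobolev in $x$ and the weighted $L^2$ factor into $E_{N,s+\gamma/2}^{1/2}$ --- this is where the weight $s+\gamma/2$ in $E$ on the right-hand side actually arises.
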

Notice that this is not a closed estimate because of the presence of  $E_{N,s+\gamma/2}$
on the right hand side. Since $s+\gamma/2>0$, this is exactly why we can not prove
global existence with only differentiation in $x$ variable.
\vspace{0.5cm}

For the proof,
let $\alpha\in\NN^3$, $|\alpha|\le N$, and apply $\p^\alpha=\p^\alpha_x$ to \eqref{4-3-1.3} to have,
\begin{equation*}
\p_t(\p^\alpha g)+ v\cdot\nabla_x (\p^\alpha g)
+ \cL(\p^\alpha g)=\p^\alpha\Gamma(g,\,
g).
\end{equation*}
Then take the $L^2(\RR^6_{x, v})$ inner product of the above
equation with $\p^\alpha g$. By Proposition \ref{prop2.1}, we have
\begin{equation}\label{microenergy1}
\frac{d}{dt}\|\p^\alpha g\|^2_{L^2(\RR^6_{x, v})}
+ ||| \p^\alpha g_2 |||^2_{\cB^0_0(\RR^6)}\lesssim J_\alpha,
\end{equation}
where
\begin{align*}
J_\alpha&=(\p^\alpha\Gamma(g,\,g), \p^\alpha g)_{L^2(\RR^6)}
=\sum_{i,j=1}^2(\p^\alpha\Gamma(g_i,\,g_j), \p^\alpha g_2)_{L^2(\RR^6)}
\\&=\sum_{i,j=1}^2 J^{(ij)}.
\end{align*}

Remember that we are dealing with the case when $0<s<1, \gamma +2s> 0$.
Firstly, consider $J^{(11)}$.  For $\psi_j\in\mathcal{N}$,
\begin{align*}
|J^{(11)}|&
\lesssim \int_{\RR^3}|\p^\alpha \cA^2|\ |(\Gamma(\psi_j,\psi_k), \p^\alpha g_2)_{L^2(\RR^3)}|dx.
\end{align*}
According to Theorem 1.2  of \cite{amuxy4-1}, we have,
\begin{align*}
|(\Gamma(\psi_j,\psi_k),
  \p^\alpha g_2)_{L^2(\RR^3_v)}|
  &\lesssim \Big(\|\psi_j\|_{L^2_{s+\gamma/2}}
|||\psi_k|||+\|\psi_k\|_{L^2_{s+\gamma/2}}|||\psi_j|||\Big)|||\p^\alpha g_2|||
\\&
\lesssim
|||\p^\alpha g_2|||,
\end{align*}
and hence
\begin{align*}
|J^{(11)}|&
\lesssim \|\p^\alpha \cA^2\|_{L^2(\RR^3)}
||\p^\alpha g_2||_{\cB^{0}_0(\RR^6)}
\lesssim
\|\nabla \cA\|_{H^{N-1}_x(\RR^3)}\|\cA\|_{H^{N-1}_x(\RR^3)}
||\p^\alpha g_2||_{\cB^{0}_0(\RR^6)}
\\&\lesssim
E_{N,0}^{1/2}D_{N,0}.
\end{align*}

On the other hand,
\begin{align*}
 J^{(12)}&\sim\int_{\RR^3_x}(\p_x^{\alpha_1}\cA)
(\Gamma(\psi_k, \p_x^{\alpha_2}g_2), \p_x^{\alpha} g_2)_{L^2(\RR^3_v)}dx.
\end{align*}
In view of  Theorem 1.2  of \cite{amuxy4-1}, we obtain
\begin{align*}
|(\Gamma(\psi_k,&\p_x^{\alpha_2}g_2), \p_x^{\alpha} g_2)_{L^2(\RR^3_v)}|
\lesssim\Big\{ \|\psi_k\|_{L^2_{s+\gamma/2}}|||\p_x^{\alpha_2}g_2|||
+ \|\p_x^{\alpha_2}g_2\|_{L^2_{s+\gamma/2}}|||\psi_k|||\\
&\quad + \min \big ( \|\psi_k\|_{L^2}\|\p_x^{\alpha_2}g_2\|_{L^2_{s+\gamma/2}}
\,,\,\|\psi_k\|_{L^2_{s+\gamma/2}}\|\p_x^{\alpha_2}g_2\|_{L^2}
\big)\Big\}|||\p_x^{\alpha}g_2|||\notag
\\&
\lesssim\Big(|||\p_x^{\alpha_2}g_2|||
+  \|\p_x^{\alpha_2}g_2\|_{L^2_{s+\gamma/2}}\Big)|||\p_x^{\alpha}g_2|||\notag
\\&
\lesssim |||\p_x^{\alpha_2} g_2||| \ |||\p_x^{\alpha} g_2|||.
  \end{align*}
 Here and hereafter,
 we will use freely that
 \begin{align*}
 \|g\|_{L^2}\lesssim \|g\|_{L^2_{s+\gamma/2}}\lesssim |||g|||.
\end{align*}
The first inequality above holds since we assume $s+\gamma/2>0$ and the second one
follows
from Theorem 1.2  of \cite{amuxy4-1}.
By using Proposition \ref{prop5.1},
  \begin{align}\notag
| J^{(12)}|&
 \lesssim\int_{\RR^3_x}\|\p_x^{\alpha_1}\cA|\ |||\p_x^{\alpha_2}g_2||| \ |||\p_x^{\alpha} g_2|||dx
 \lesssim
E_{N,0}^{1/2}D_{N,0}.
\end{align}

A similar argument  applies to
\begin{align*}
 J^{(21)}&\sim\int_{\RR^3_x}(\p_x^{\alpha_2}\cA)
(\Gamma(\p_x^{\alpha_1}g_2, \varphi_k), \p_x^{\alpha} g_2)_{L^2(\RR^3_v)}dx.
\end{align*}
 In fact, Theorem 1.2  of \cite{amuxy4-1} gives,
\begin{align*}
\Big|\Big(\Gamma(\p_x^{\alpha_1}g_2, \varphi_k), \p_x^{\alpha} &g_2)_{L^2(\RR^3_v)}\Big|
\lesssim\Big( \|\p_x^{\alpha_1}g_2\|_{L^2_{s+\gamma/2}}|||\psi_k|||
+\|\psi_k\|_{L^2_{s+\gamma/2}}|||\p_x^{\alpha_1}g_2|||
\\
&\quad { + \min \big ( \|\p_x^{\alpha_1}g_2\|_{L^2} \ \|\psi_k\|_{L^2_{s+\gamma/2}}
\,,\,\|\p_x^{\alpha_1}g_2\|_{L^2_{s+\gamma/2}} \|\psi_k\|_{L^2
 }
\big)}\Big)|||\p_x^{\alpha}g_2|||
\\
&\lesssim( \|\p_x^{\alpha_1}g_2\|_{L^2_{s+\gamma/2}}
+|||\p_x^{\alpha_1}g_2|||
)|||\p_x^{\alpha}g_2|||
\\&\lesssim
 \ |||\p_x^{\alpha_1}g_2||| \ |||\p_x^{\alpha}g_2|||.
\end{align*}
Therefore,  similar to $J^{(12)}$, we have
\[
|J^{(21)}| \lesssim
E_{N,0}^{1/2}D_{N,0}.
\]

For the estimation on $J^{(22)}$,
 from  Theorem 1.2 of \cite{amuxy4-1} again, we have
\begin{align*}
|(\Gamma(\p_x^{\alpha_1}g_2, &\p_x^{\alpha_2}g_2), \p_x^{\alpha} g_2)_{L^2(\RR^3)}|
\lesssim
\Big\{\|\p_x^{\alpha_1}g_2\|_{L^2_{s+\gamma/2}}|||\p_x^{\alpha_2}g_2|||
+|||\p_x^{\alpha_1}g_2||| \ \|\p_x^{\alpha_2}g_2\|_{L^2_{s+\gamma/2}}
\\&
{ + \min \big ( \|\p_x^{\alpha_1}g_2\|_{L^2} \ \|\p_x^{\alpha_2}g_2\|_{L^2_{s+\gamma/2}}
\,,\,\|\p_x^{\alpha_1}g_2\|_{L^2_{s+\gamma/2}} \|\p_x^{\alpha_2}g_2\|_{L^2}
\big)} \Big\}
 \
||| \p_x^{\alpha} g_2|||
\\&
\lesssim
\Big(\|\p_x^{\alpha_1}g_2\|_{L^2_{s+\gamma/2}}|||\p_x^{\alpha_2}g_2|||
+|||\p_x^{\alpha_1}g_2||| \ \|\p_x^{\alpha_2}g_2\|_{L^2_{s+\gamma/2}}
 \Big)
 \
||| \p_x^{\alpha} g_2|||.
\end{align*}
Firstly, suppose that $|\alpha_1|\le N-2$. Then
\begin{align*}
|J^{(22)}|&=|(\Gamma(\p_x^{\alpha_1}g_2, \p_x^{\alpha_2}g_2), \p_x^{\alpha} g_2)_{L^2(\RR^6)}|
\\&
\lesssim
\|\p_x^{\alpha_1}g_2\|_{L^\infty_x(L^2_{s+\gamma/2})}\int_{\RR^3}|||\p_x^{\alpha_2}g_2|||\
||| \p_x^{\alpha} g_2|||dx
+\|\ |||\p_x^{\alpha_1}g_2||| \ \|_{L^\infty_x}
\int_{\RR^3}||\p_x^{\alpha_2}g_2||_{L^2_{s+\gamma/2}}\
||| \p_x^{\alpha} g_2|||dx
\\&
\lesssim
E_{N,s+\gamma/2}^{1/2}D_{N,0}.
\end{align*}
Similary, when $|\alpha_1|> N-2$, we have  $|\alpha_2|\le 1$ so that
\begin{align*}
|J^{(22)}|&
\lesssim
\|\p_x^{\alpha_2}g_2\|_{L^\infty_x(L^2_{s+\gamma/2})}\int_{\RR^3}|||\p_x^{\alpha_1}g_2|||\
||| \p_x^{\alpha} g_2|||dx
+\|\ |||\p_x^{\alpha_2}g_2||| \ \|_{L^\infty_x}
\int_{\RR^3}||\p_x^{\alpha_1}g_2||_{L^2_{s+\gamma/2}}\
||| \p_x^{\alpha} g_2|||dx
\\&
\lesssim
E_{N,s+\gamma/2}^{1/2}D_{N,0}.
\end{align*}

Taking the summation of \eqref{microenergy1} over $|\alpha|\le N, N\ge 3$
gives \eqref{microenergy11}.
\subsection*{Microscopic energy estimate with weight.}
In order to close the estimate \eqref{microenergy11}, we need the estimates of
$x$-$v$ derivatives of the solution with weight.
Firstly, let
$
\p^\alpha_\beta=\p^\alpha_x\p^\beta_v,  |\alpha+\beta |\le N, N\ge 6,
$
and apply $W_\ell\p^\alpha_\beta(\iI-\pP)$ to \eqref{4-3-1.3}.
We have
\begin{align*}
 \p_t (W_\ell\p^\alpha_\beta g_2)+ v\cdot\nabla_x
(W_\ell\p^\alpha_\beta g_2) + \cL _1(W_\ell\p^\alpha_\beta g_2)=\cM^{\alpha,\beta},
\end{align*}
where, with $e_i\in \NN^n, |e_i|=1$,
\begin{align*}
\cM^{\alpha,\beta}&=W_\ell\p^\alpha_\beta \Gamma(g,\, g)
+W_\ell\p^\alpha_\beta[\pP, v \cdot\nabla_x]g
-\sum_{i=1}^N\p_{x_i}(W_\ell\p^\alpha_{\beta-e_i} g_2)
-[W_\ell\p^\alpha_\beta, \cL_1]g_2-W_\ell\p^\alpha_\beta\cL_2g_2
\\&=\cM_1+\cM_2+\cM_3+\cM_4+\cM_5.
\end{align*}
Take the $L^2(\RR^6_{x, v})$ inner product of this equation
with $W_\ell\p^\alpha_\beta g_2$ to deduce
\begin{equation}\label{energyweight}
 \frac{d}{dt}\| W_\ell\p^\alpha_\beta g_2\|^2_{L^2(\RR^6)}+D\lesssim M,
\end{equation}
where $D$ is the dissipation rate given by
\begin{align*}
 D&=\int_{\RR^3}
||| (\iI-\pP)W_\ell\p^\alpha_\beta g_2 |||^2dx
\ge|| W_\ell\p^\alpha_\beta g_2 ||_{\cB^0_0(\RR^3)}
-C||  g_2 ||_{H^N_0(\RR^6)}^2.
\end{align*}
Here, we have used
\begin{align*}
|||\pP W_\ell \p^{\alpha}_{\beta}g_2|||
\lesssim||\p^\alpha g_2||_{L^2_v}.
\end{align*}
And $M$ is defined by
\[
M=
\sum_{j=1}^5(\cM_j, W_\ell\p^\alpha_\beta g_2)_{L^2(\RR^6)}
=\sum_{j=1}^5M_j.
\]

Firstly,
note that from Propositions 2.3 and 2.5, we have
\begin{align}\label{a11}
|(W_\ell& \Gamma(\p_{\beta_1}f,\p_{\beta_2}g), h)_{L^2(\RR^3)}|
\lesssim
\Big(\|W_{s+\gamma/2}\p_{\beta_1}f\|_{L^2(\RR^3)}|||W_{\ell} \p_{\beta_2}g|||
\\&\hspace{1cm}
+||W_{s+\gamma/2} \p_{\beta_2}g||_{L^2(\RR^3)} \ |||W_{\ell}\p_{\beta_1}f|||\notag
 \Big)|||h|||.
 \end{align}

Write
\begin{align*}
M_1&=\sum_{i,j=1,2}(W_\ell\p^\alpha_\beta \Gamma(g_i,\,g_j), W_\ell\p^\alpha_\beta g_2)_{L^2(\RR^6)}
=\sum_{i,j=1,2}M_{1ij}.
\end{align*}
We have
\begin{align*}
M_{111}&=(W_\ell\p^\alpha_\beta \Gamma(g_1,\,g_1), W_\ell\p^\alpha_\beta g_2)_{L^2(\RR^6)}
\\&\sim \int_{\RR^3}
(\p^{\alpha_1}\cA)(\p^{\alpha_2}\cA)
(W_\ell\p_\beta\Gamma(\psi_j,\psi_k), W_\ell\p^\alpha_\beta g_2)_{L^2(\RR^3)}dx.
\end{align*}
Recall that by Leibnitz formula, the differentiation on $\Gamma$ involves
the nonlinear operators $\Gamma$ and $\cT$. Since these two operators
 share the same upper bound and commutator properties, for brevity, we only consider
 the nonlinear operator $\Gamma$.
 By using  \eqref{a11},  since $\psi$ is a  function with an
exponential decay factor, we obtain for $|\beta_1|+|\beta_2|\le |\beta|$,
\begin{align*}
&||(W_\ell \Gamma(\p_{\beta_1}\psi_j,\p_{\beta_2}\psi_k), W_\ell\p^\alpha_\beta g_2)_{L^2(\RR^3)}|
\lesssim |||W_\ell\p^\alpha_\beta g_2|||.
 \end{align*}
 Therefore,
 \begin{align*}
 M_{111}&\sim \int_{\RR^3}
(\p^{\alpha_1}\cA)(\p^{\alpha_2}\cA)|||W_\ell\p^\alpha_\beta g_2|||
dx
\\&\lesssim \|\nabla_x\cA\|_{H^{N-1}(\RR^3)}\|\cA\|_{H^{N}(\RR^3)}\|g_2\|_{\cB^N_\ell(\RR^6)}.
 \end{align*}

Now consider
\begin{align*}
M_{112}&=(W_\ell\p^\alpha_\beta \Gamma(g_1,g_2), W_\ell\p^\alpha_\beta g_2)_{L^2(\RR^6)}
\\&\sim \int_{\RR^3}
(\p^{\alpha_1}\cA)
(W_\ell\p_\beta\Gamma(\psi_j,\p^{\alpha_2}g_2), W_\ell\p^\alpha_\beta g_2)_{L^2(\RR^3)}dx
.
\end{align*}
By  \eqref{a11},
\begin{align*}
&|(W_\ell \Gamma(\p_{\beta_1}\psi_j,\p^{\alpha_2}_{\beta_2}g_2), W_\ell\p^\alpha_\beta g_2)_{L^2(\RR^3)}|
\\&\lesssim
\Big(\|W_{s+\gamma/2}\p_{\beta_1}\psi_j\|_{L^2(\RR^3)}|||W_{\ell} \p^{\alpha_2}_{\beta_2}g_2|||
+
||W_{s+\gamma/2} \p^{\alpha_2}_{\beta_2}g_2||_{L^2(\RR^3)} \ |||W_{\ell}\p_{\beta_1}\psi_j|||
 \Big)|||W_\ell \p^{\alpha}_{\beta}g_2|||
 \\&\lesssim
 |||W_{\ell} \p^{\alpha_2}_{\beta_2}g_2|||\ |||W_\ell \p^{\alpha}_{\beta}g_2|||,
\end{align*}
where we have used
\[
||W_{s+\gamma/2} \p^{\alpha_2}_{\beta_2}g_2||_{L^2(\RR^3)} \lesssim
|||W_{\ell} \p^{\alpha_2}_{\beta_2}g_2|||.
\]
Thus,
\begin{align*}
M_{112}\lesssim
\|\cA\|_{H^N(\RR^3)}\|g_2\|_{\cB^N_\ell}^2.
\end{align*}

Next, notice that
\begin{align*}
M_{121}&=(W_\ell\p^\alpha_\beta \Gamma(g_2,g_1), W_\ell\p^\alpha_\beta g_2)_{L^2(\RR^6)}
\\&\sim \int_{\RR^3}
(\p^{\alpha_2}\cA)
(W_\ell\p_\beta\Gamma(\p^{\alpha_1} g_2,\psi_j), W_\ell\p^\alpha_\beta g_2)_{L^2(\RR^3)}dx
.
\end{align*}
As above,   \eqref{a11}
yields
\begin{align*}
&|(W_\ell \Gamma(\p^{\alpha_1}_{\beta_1} g_2,\p_{\beta_2}\psi_j), W_\ell\p^\alpha_\beta g_2)_{L^2(\RR^3)}|
\\&\lesssim
\Big(\|W_{s+\gamma/2}\p^{\alpha_1}_{\beta_1} g_2\|_{L^2(\RR^3)}
|||W_{\ell} \p_{\beta_2}\psi_j|||
+||W_{s+\gamma/2} \p_{\beta_2}\psi_j||_{L^2(\RR^3)} \ |||W_{\ell}\p^{\alpha_1}_{\beta_1}g_2|||
 \Big)|||W_\ell \p^{\alpha}_{\beta}g_2|||
\\&\lesssim
\Big(\|W_{s+\gamma/2}\p^{\alpha_1}_{\beta_1} g_2\|_{L^2(\RR^3)}
+ |||W_{\ell}\p^{\alpha_1}_{\beta_1}g_2|||
 \Big)
|||W_\ell \p^{\alpha}_{\beta}g_2|||
\\&\lesssim
|||W_{\ell} \p^{\alpha_1}_{\beta_1}g_2|||\ |||W_\ell \p^{\alpha}_{\beta}g_2|||,
 \end{align*}
where we have used $\|W_{s+\gamma/2} g\|_{L^2(\RR^3)}\lesssim |||W_\ell g|||$.
 Consequently,
  \begin{align*}
 M_{121}
\lesssim
\|\cA\|_{H^N(\RR^3)}\|g_2\|_{\cB^N_\ell(\RR^6)}^2.
\end{align*}

It remains to evaluate
\begin{align*}
M_{122}&=(W_\ell\p^\alpha_\beta \Gamma(g_2,g_2), W_\ell\p^\alpha_\beta g_2)_{L^2(\RR^6)}.
\end{align*}
For this, we can apply Proposition \ref{prop4.2} to have
 \begin{align*}
 M_{122}&\lesssim
 \|g_2\|_{H^N_\ell(\RR^6)}||g_2||_{\cB^N_\ell(\RR^6)}^2.
\end{align*}
In conclusion, we have proved
\begin{align*}
M_1
\lesssim \cE_{N,\ell}^{1/2}\cD_{N,\ell}.
\end{align*}

By using integration by parts  and taking into account that $s+\gamma/2> 0$, we get
\begin{align*}
M_2&\lesssim |(W_\ell\p^\alpha_\beta[\pP, v \cdot\nabla_x]g,
W_\ell\p^\alpha_\beta g_2)_{L^2(\RR^6)}|
\\&\lesssim
(\|\p^\alpha \nabla_x\cA\|_{L^2(\RR^3)}+\|\p^\alpha \nabla_xg_2\|_{L^2(\RR^6 )})
\|\p^\alpha g_2\|_{L^2(\RR^6)}
\\&
\lesssim
\delta_0\|\nabla\cA\|_{H^{N-1}(\RR^3)}^2+ C_{\delta_0}
\|g_2\|_{H^N(\RR^3_x, L^2(\RR^3_v))}^2,
\end{align*}
where $\delta_0>0$ is a small constant.

Similarly, For $1\le|\beta|\le N$,
\begin{align*}
M_3\lesssim
\|W_\ell \p^{\alpha+1}_{\beta-1}g_2\|_{L^2(\RR^6)}
\ \|W_\ell \p^{\alpha}_{\beta}g_2\|_{L^2(\RR^6)}
\le C_\delta \|\p^{\alpha+1}_{\beta-1}g_2\|_{\cB^0_\ell(\RR^6)}^2
+\delta \|g_2\|_{\cB^N_\ell(\RR^6)}^2,
\end{align*}
where $\delta>0$ is another small constant.

The main ingredients of the estimation on $M_4$
are the commutator estimates  $I$ and $II$ which are
defined in  the proof of Proposition 4.8 in \cite{amuxy4-1}. Note that they
 are valid in general for $\gamma >-3$ so that the estimates there can be used here. That is,
\begin{align*}
|I|&=|([W_\ell,  \cL_1] g , W_\ell g)_{L^2(\RR^3)}|
\lesssim
\|W_\ell g \|_{L^2_{\gamma/2}(\RR^3)}^2.
\\
|II|&=  |(W_\ell [\partial_\beta, \cL_1 ]g, W_\ell \partial_\beta g)_{L^2(\RR^3)}
\\&
\lesssim \sum_{\beta_1 +\beta_2 +\beta_3=\beta, \ \beta_2 \neq 0}
| (W_\ell\cT(\p_{\beta_1}\mu^{1/2}, \partial_{\beta_2} g, \p_{\beta_3}\mu^{1/2}),
W_\ell \partial_\beta g)_{L^2(\RR^3)}|
\\&\lesssim
\Big( \sum_{\beta_1 +\beta_2 =\beta, \ \beta_2 \neq 0} ||| W_\ell \partial_{\beta_1} g|||_{\Phi_\gamma} \Big) ||| W_\ell \partial_\beta g |||_{\Phi_\gamma} .
\end{align*}
With this, later we  also need the following interpolation inequality
\begin{equation*}
\|W_\ell\p_\beta h\|_{L^2_{\gamma/2}}\lesssim
C_\delta\|\p_{\beta} h\|_{L^2_{s+\gamma/2}}+\delta
\|W_\ell\p_\beta h\|_{L^2_{s+\gamma/2}}
\lesssim
C_\delta\|| \p_{\beta}h\||_{\Phi_\gamma}+\delta
|||W_\ell\p_\beta h|||_{\Phi_\gamma}.
\end{equation*}
For the term $M_4$, using Proposition \ref{prop3.2.4}
\begin{align*}
M_4&\le |([W_\ell\p^\alpha_{\beta},\cL_1]g_2,W_\ell\p^\alpha_{\beta}g_2)_{L^2(\RR^6)}|
\\&\lesssim
\Big(\|g_2\|_{H^{|\alpha|+|\beta|}_\ell}+\sum_{|\alpha'|=|\alpha|, |\beta'|=|\beta|-1}\|\p^{\alpha'}_{\beta'}g_2\|_{\cB^{0}_\ell(\RR^3)}\Big)
\|g_2\|_{\cB^{|\alpha|+|\beta|}_\ell}
\end{align*}

Finally,  in view of Proposition \ref{prop4.4}, we have
\begin{align*}
M_5&=|(W_\ell\p^\alpha_{\beta}\cL_2g_2, W_\ell\p^\alpha_{\beta}g_2)_{L^2(\RR^6)}|
\le |(\p^\alpha_{\beta}\cL_2 g_2, W_{2\ell}\p^\alpha_{\beta}g_2)_{L^2(\RR^6)}|
\\&\le
C\| g_2\|_{H^{|\alpha|+|\beta|}(\RR^6)}
\|\mu^{1/10^3}W_{2\ell}\p^\alpha_{\beta}g_2\|_{L^2(\RR^6)}
\lesssim\| g_2\|_{H^{|\alpha|+|\beta|}(\RR^6)}^2.
\end{align*}
Now interpolation inequality
$$
\|g_2\|_{H^{|\alpha|+|\beta|}_\ell}+
\| g_2\|_{H^{|\alpha|+|\beta|}(\RR^6)}\leq \delta \|g_2\|_{\cB^{|\alpha|+|\beta|}_\ell}
+C_{\delta}\sum_{|\alpha'|=|\alpha|+1, |\beta'|=|\beta|-1}\|\p^{\alpha'}_{\beta'}g_2\|_{\cB^{0}_\ell}\,.
$$

 Plugging all these estimates to \eqref{energyweight} and fix the small constant
 $\delta$ with respect to the coefficient in front of the dissipation rate, we have
 for $|\beta|\neq 0$,
\begin{align*}
 \frac{d}{dt}&\| W_\ell\p^\alpha_\beta g_2\|^2_{L^2(\RR^6)}+
|| W_\ell\p^\alpha_\beta g_2 ||_{\cB^0_0}
\\&\lesssim
\|g_2\|_{H^N(\RR^3_x, L^2(\RR^3_v))}^2
+\cE_{N,\ell}^{1/2}\cD_{N,\ell}
\\&\notag+\delta_0\|\nabla_x\cA\|_{H^{N-1}(\RR^3)}^2
+\sum_{|\alpha'|=|\alpha|+1, |\beta'|=|\beta|-1}\|\p^{\alpha'}_{\beta'}g_2\|_{\cB^{0}_\ell}^2.
\end{align*}
By induction on $|\beta|$ and $|\alpha|+|\beta|$, we have
\begin{align}\label{energyalphabeta-2}
\frac{d}{dt}\cE_{N,\ell}+\cD_{N,\ell}\lesssim \|g_2\|_{H^N(\RR^3_x, L^2(\RR^3_v))}^2+\delta_0\|\nabla_x\cA\|_{H^{N-1}(\RR^3)}^2+\cE_{N,\ell}^{1/2}\cD_{N,\ell}.
\end{align}
Taking a suitable linear combination of
the estimates \eqref{pabc}, \eqref{microenergy11}, and \eqref{energyalphabeta-2}, we then
conclude Proposition \ref{energy}. And this completes the energy
estimate on the solution so that the global existence follows in the standard
way for small perturbation.

\bigskip

\noindent
{\bf Acknowledgements:}
The research of the first author was supported in part by Zhiyuan foundation and Shanghai Jiao Tong University. The research of the second author was
supported by  Grant-in-Aid for Scientific Research No.22540187,
Japan Society of the Promotion of Science.
The last author's research was supported by the General Research
Fund of Hong Kong,
 CityU No.103109, and the Lou Jia Shan Scholarship programme of
Wuhan University. The authors would like to
thank the financial supports from City University of Hong Kong, Kyoto
University, Rouen University and Wuhan University for their visits.

\smallskip

\end{document}